\documentclass[12pt]{amsart}

\usepackage{amsmath, amssymb, amsthm, amsfonts}
\usepackage{mathrsfs}
\usepackage{fullpage}
\usepackage{enumitem}
\usepackage{hyperref}

\theoremstyle{plain}
\newtheorem{theorem}{Theorem}[section]
\newtheorem{proposition}[theorem]{Proposition}
\newtheorem{lemma}[theorem]{Lemma}

\theoremstyle{definition}

\newtheorem{example}[theorem]{Example}

\theoremstyle{remark}

\hypersetup{
	colorlinks=true,
	linkcolor=blue,
	citecolor=blue,
	urlcolor=blue
}

\title[Cohen--Macaulay power quotients]{Cohen-Macaulay higher conormal and K\"ahler differential modules of squarefree monomial ideals}

\author{T\`ai Huy H\`a}
\address{Tulane University, Department of Mathematics, 6823 St. Charles Avenue, New Orleans LA 70118, USA}
\email{tha@tulane.edu}

\author{Nguyen Cong Minh}
\address{Faculty of Mathematics and Informatics \\ Hanoi University of Science and Technology \\
	1 Dai Co Viet \\ Hanoi, Vietnam}
\email{minh.nguyencong@hust.edu.vn}

\subjclass[2020]{13F55, 13H10, 05E40, 55U10, 13N15}
\keywords{Cohen--Macaulay module, power filtration, higher conormal module, K\"ahler differential module, squarefree monomial ideal, Stanley--Reisner ideal, edge ideal, symbolic power, complete intersection, matroid, degree complex}

\begin{document}

\begin{abstract}
Let $S=k[x_1,\ldots,x_n]$ and let $I=I_\Delta\subsetneq S$ be a nonzero
squarefree monomial ideal. Motivated by the classical higher-order K\"ahler
differential modules and by the theory of higher conormal modules, we study
not only the higher conormal quotients $I/I^q$, but more generally the shifted
quotients $I^r/I^q$, $1\le r<q$, in the same $I$-adic conormal filtration,
together with their symbolic analogues $I^{(r)}/I^{(q)}$. We prove that, for
every $1\le r<q$ with $q\ge3$, the module $I^r/I^q$ is Cohen--Macaulay if and
only if $I$ is a complete intersection. In sharp contrast,
$I^{(r)}/I^{(q)}$ is Cohen--Macaulay if and only if $\Delta$ is a matroid,
where loops are allowed. Thus, the Cohen--Macaulayness of a single nonexceptional
window forces the Cohen--Macaulayness of every window in the corresponding
filtration. The unique exceptional pair is $(r,q)=(1,2)$: at this conormal
level, we show that the Cohen--Macaulayness of $I/I^2$ forces $I^2=I^{(2)}$, and
hence $I/I^{(2)}$ is Cohen--Macaulay.
\end{abstract}

\maketitle

\section{Introduction}\label{sec:introduction}

Let $S=k[x_1,\ldots,x_n]$ be a polynomial ring over a field and let
$I\subseteq S$ be a homogeneous ideal. The conormal module $I/I^2$ is one of
the classical modules attached to the embedding $\operatorname{Spec}(S/I)
\subseteq \operatorname{Spec}(S)$. It is linked to K\"ahler differentials by
the conormal sequence
$$
I/I^2\longrightarrow \Omega_{S/k}\otimes_S S/I
\longrightarrow \Omega_{(S/I)/k}\longrightarrow0.
$$
The structure of $I/I^2$ has therefore played a central role in commutative
algebra and in the study of infinitesimal embeddings. Classical
complete-intersection criteria were obtained by Vasconcelos
\cite{VasconcelosRSequences} and Ferrand \cite{Ferrand}, and the homological
theory of the conormal module was developed further by Herzog
\cite{HerzogConormal}, Vasconcelos
\cite{VasconcelosHomology,VasconcelosKoszul}, and Simis--Vasconcelos
\cite{SimisVasconcelos}. See also
\cite{HeinzerKimUlrich,HunekeUlrich,VasconcelosBook} for related connections
with Rees and associated graded algebras.

There is also a classical higher-order K\"ahler theory. If $A$ is a
$k$-algebra and
$$
I_A=\ker(A\otimes_kA\longrightarrow A)
$$
is the diagonal ideal, then the module of K\"ahler differentials of order $n$
is
$$
\Omega^{(n)}_{A/k}=I_A/I_A^{n+1}.
$$
This construction goes back to Grothendieck, Nakai, and Osborn
\cite{Grothendieck,Nakai,Osborn}; see Barajas--Duarte \cite{BarajasDuarte}
for a modern treatment. Barajas--Duarte use the terminology ``module of
K\"ahler differentials of order $n$,'' while de Alba--Duarte also use
``high order K\"ahler differentials'' \cite{DeAlbaDuarte}. For a general
defining ideal $I\subseteq S$, the quotient $I/I^q$ is the corresponding
higher conormal object attached to the embedding
$\operatorname{Spec}(S/I)\subseteq\operatorname{Spec}(S)$; when $I$ is the
diagonal ideal, it recovers the classical module of K\"ahler differentials
of order $q-1$. Motivated by this higher-order K\"ahler terminology, we
refer to $I/I^q$ as the higher K\"ahler differential module associated to
$I$. Equivalently, $I/I^q$ records the first $q-1$ layers of the $I$-adic
conormal filtration.

Two classical questions about $I/I^2$ are particularly relevant here.
Vasconcelos conjectured that finite projective dimension of $I/I^2$ over
$S/I$, under the natural finite projective-dimension hypothesis on $I$, should
force $I$ to be a complete intersection. Avramov--Herzog proved the graded
characteristic-zero case \cite{AvramovHerzog}, and Briggs proved the
conjecture in full generality \cite{Briggs};
see also \cite{HerzogBriggsIyengar} for a recent account of these questions and related results concerning conormal modules and modules of differentials.
A second line of work asks what
Cohen--Macaulayness of $I/I^2$ forces on $S/I$. Results of Vasconcelos
\cite{VasconcelosKoszul}, Simis--Vasconcelos--Villarreal
\cite{SimisVasconcelosVillarreal}, Huneke--Ulrich \cite{HunekeUlrich}, and
Mantero--Xie \cite{ManteroXie} show both the strength and the limitations of
this condition.

In the Stanley--Reisner setting, the conormal question is closely related to
the Cohen--Macaulayness of the square. When $S/I$ is Cohen--Macaulay, the
exact sequence
$$
0\longrightarrow I/I^2\longrightarrow S/I^2\longrightarrow S/I\longrightarrow0
$$
and the Depth Lemma identify Cohen--Macaulayness of $I/I^2$ with that of
$S/I^2$. Thus, results on Cohen--Macaulay second powers give direct information
about conormal modules; see, for example,
\cite{FaridiHibi,MinhTrung2009,RinaldoTerai,RinaldoTeraiYoshida,TrungTuan}.
This naturally leads to higher conormal modules.

The module $I/I^q$ has a finite filtration whose successive quotients are
$I^t/I^{t+1}$, $1\le t<q$. More generally, for $1\le r<q$, the quotient
$$
I^r/I^q
$$
is the shifted part of this higher conormal filtration containing precisely
the layers $I^t/I^{t+1}$ with $r\le t<q$; in particular, it is a submodule of
$I/I^q$. Thus, once one asks for the Cohen--Macaulayness of higher conormal
modules, it is natural to ask whether the same phenomenon persists for every
finite window of the conormal filtration. Our first main theorem gives a
complete answer for squarefree monomial ideals. Theorem~\ref{thm:ordinary}
proves that for every nonzero squarefree monomial ideal $I\subsetneq S$ and
every $1\le r<q$ with $q\ge3$,
$$
I^r/I^q\text{ is Cohen--Macaulay}
\Longleftrightarrow
I\text{ is a complete intersection}.
$$
The original higher K\"ahler differential modules correspond to $r=1$.
Thus, the theorem not only classifies $I/I^q$ for every $q\ge3$, but shows that
the same complete-intersection rigidity is already detected by any shifted
window with upper exponent at least three. Writing $q=r+m$ gives, in
particular, all modules $I^r/I^{r+m}$ for fixed $r$, while taking $q=r+1$
shows that, for every $r\ge2$, the single graded piece $I^r/I^{r+1}$ is
Cohen--Macaulay if and only if $I$ is a complete intersection. Consequently,
Cohen--Macaulayness of one ordinary window with upper exponent at least three
forces Cohen--Macaulayness of every quotient $I^a/I^b$, $1\le a<b$. For edge
ideals, Theorem~\ref{CM-edge-ideals} gives the equivalent graph-theoretic
condition that, after deleting isolated vertices, the graph is a matching.

There is a natural symbolic counterpart. For a radical ideal $I$, the symbolic
power $I^{(q)}$ records the $q$-fold thickening along the minimal components of
$V(I)$ while avoiding embedded components introduced by ordinary powers. We
therefore view
$$
I/I^{(q)}
$$
as the symbolic analogue of the higher K\"ahler differential module $I/I^q$.
This interpretation is not merely formal: conormal modules and symbolic powers
already interact at the level of the second symbolic power; see Jiang
\cite{Jiang}. Moreover, $I/I^{(q)}$ has successive symbolic conormal layers
$I^{(t)}/I^{(t+1)}$, and in our previous work we classified the
Cohen--Macaulayness of these successive quotients for squarefree monomial
ideals \cite{HaMinh}. Just as in the ordinary filtration, the more general
quotient
$$
I^{(r)}/I^{(q)},\qquad 1\le r<q,
$$
is a shifted symbolic window inside $I/I^{(q)}$.

The symbolic classification is strikingly different from the ordinary one.
If $I=I_\Delta$, Theorem~\ref{thm:symbolic} proves that for every
$1\le r<q$ with $q\ge3$,
$$
I^{(r)}/I^{(q)}\text{ is Cohen--Macaulay}
\Longleftrightarrow
\Delta\text{ is a matroid}.
$$
Here matroids are allowed to have loops, so variables may belong to
$I_\Delta$. In particular, the case $r=1$ classifies all symbolic higher
conormal modules $I/I^{(q)}$, while the general statement shows that every
shifted symbolic window detects exactly the same matroid structure. Thus
Cohen--Macaulayness of a single symbolic window with upper exponent at least
three forces Cohen--Macaulayness of every symbolic window.

The two classifications parallel a well-known rigidity phenomenon for the
ambient thickenings themselves. Terai and Trung proved that, for a
Stanley--Reisner ideal, sufficiently high ordinary powers can be
Cohen--Macaulay only for complete intersections, while Cohen--Macaulay
symbolic powers force a matroid structure \cite{TeraiTrung}; the symbolic
all-powers characterization was also obtained independently in
\cite{MinhTrung,Varbaro}. Our results show that the same two rigid structures
are detected already by the corresponding conormal filtration quotients. They
fit into the exact sequences
$$
0\longrightarrow I^r/I^q\longrightarrow S/I^q\longrightarrow S/I^r\longrightarrow0
$$
and
$$
0\longrightarrow I^{(r)}/I^{(q)}\longrightarrow S/I^{(q)}
\longrightarrow S/I^{(r)}\longrightarrow0.
$$
This is not an automatic consequence of the corresponding ring-theoretic
results: Cohen--Macaulayness of the kernel in either sequence does not in
general force Cohen--Macaulayness of either neighboring term.

There is exactly one pair $1\le r<q$ not covered by $q\ge3$, namely
$(r,q)=(1,2)$. This is the classical conormal module and is genuinely
exceptional. Proposition~\ref{prop:second-symbolic} shows that if $I/I^2$ is
Cohen--Macaulay, then
$$
I^2=I^{(2)},
$$
and hence $I/I^{(2)}=I/I^2$ is Cohen--Macaulay. The exception is real. Let
$\Delta=C_5$ be a pentagon and $I=I_\Delta$. Then, $S/I$ and $S/I^2$ are
Cohen--Macaulay \cite{FaridiHibi}, so $I/I^2$ is Cohen--Macaulay; moreover,
$I^2=I^{(2)}$, and hence $I/I^{(2)}$ is Cohen--Macaulay as well. But $C_5$ is
neither a complete-intersection complex nor a matroid. The two main theorems
therefore imply that every ordinary and symbolic window with upper exponent
$q\ge3$ is non-Cohen--Macaulay for this example. Thus, the transition from the
second to the third power is genuinely sharp in both theories. See
\cite{RinaldoTeraiYoshida} for broader exceptional behavior of second powers
of Stanley--Reisner ideals.

The contrast between the ordinary and symbolic theories is already visible
for the rank-two uniform matroid $\Delta=U_{2,4}$, whose Stanley--Reisner ideal
is
$$
I_\Delta=(x_1x_2x_3,x_1x_2x_4,x_1x_3x_4,x_2x_3x_4).
$$
Here $I_\Delta$ is not a complete intersection, whereas $\Delta$ is a matroid.
Moreover, $x_1x_2x_3x_4\in I_\Delta^{(2)}\setminus I_\Delta^2$, so
Proposition~\ref{prop:second-symbolic} shows that $I_\Delta/I_\Delta^2$ is not
Cohen--Macaulay. Hence
$$
I_\Delta^r/I_\Delta^q\text{ is not Cohen--Macaulay for every }1\le r<q,
$$
whereas
$$
I_\Delta^{(r)}/I_\Delta^{(q)}\text{ is Cohen--Macaulay for every }1\le r<q.
$$
Thus, the ordinary and symbolic higher conormal filtrations can have opposite
Cohen--Macaulay behavior in every window.

The proofs of the ordinary and symbolic classifications are driven by closely
related homological obstructions. Our starting point is the relative
Hochster--Takayama formula developed in \cite{HaMinh}, which describes the
multigraded pieces of local cohomology of a monomial quotient in terms of the
relative homology of a pair of degree complexes. For a window with upper
exponent $q\ge3$, the decisive multidegree is
$$
\mathbf b=(q-1)\mathbf e_1+\mathbf e_2+\mathbf e_3.
$$
At a local three-vertex obstruction, the degree complex of the denominator
splits into two stars, while the degree complex of the numerator lies in one
of them. After introducing negative coordinates along a maximal common link,
we obtain nonzero relative homology in degree zero and therefore a nonzero
local cohomology module below the dimension. In the ordinary case,
localization at vertices reduces the general squarefree problem to locally
complete-intersection complexes, and the structure theorem of Terai and
Yoshida \cite{TeraiYoshida} completes the induction. In the symbolic case, a
failure of the matroid exchange axiom produces the same relative two-star
pattern directly.

The paper is organized as follows. Section~\ref{sec:preliminaries} records the
notation on degree complexes, the relative Hochster--Takayama formula, and a
relative zero-homology observation. Section~\ref{sec:ordinary} develops the
ordinary higher conormal filtration theory, beginning with the exceptional
second conormal module and a three-vertex obstruction, then treating edge
ideals and arbitrary squarefree monomial ideals. Section~\ref{sec:symbolic}
develops the symbolic analogue and proves the matroid characterization.

\medskip

\noindent\textbf{Acknowledgment.} The first author is partially supported by a Simons Foundation grant. The second author is partially supported by a project of Vietnam Ministry of Education and Training.

\section{Preliminaries}\label{sec:preliminaries}

Throughout the paper, $k$ is a field and $S=k[x_1,\ldots,x_n]$ with homogeneous
maximal ideal $\mathfrak m=(x_1,\ldots,x_n)$. We write $[n]=\{1,\ldots,n\}$.
If $I\subseteq S$ is squarefree, we write $I=I_\Delta$ for its
Stanley--Reisner ideal. We do not require every singleton $\{i\}$ to be a face
of $\Delta$; in the matroid terminology used later, the missing vertices are
loops. We denote by $\mathcal F(\Delta)$ the set of facets of $\Delta$ and use
$\operatorname{star}_\Delta(F)$ and $\operatorname{link}_\Delta(F)$ for the
star and link of a face $F$. We use $\varnothing$ for the void simplicial
complex and $\{\emptyset\}$ for the complex consisting only of the empty face.

For $\mathbf a=(a_1,\ldots,a_n)\in\mathbb Z^n$, set
$G_{\mathbf a}=\{i:a_i<0\}$ and let $\mathbf a^+\in\mathbb N^n$ be obtained by
replacing the negative entries of $\mathbf a$ by zero. For $F\subseteq[n]$,
write $x_F=\prod_{i\in F}x_i$ and $\mathbf e_F=\sum_{i\in F}\mathbf e_i$. If
$J$ is a monomial ideal, its degree complex at $\mathbf a$ is
$$
\Delta_{\mathbf a}(J)
=
\bigl\{F\subseteq[n]\setminus G_{\mathbf a}:
 x^{\mathbf a}\notin J S_{x_{F\cup G_{\mathbf a}}}\bigr\},
$$
where the variables indexed by $G_{\mathbf a}$ are inverted, so
$x^{\mathbf a}$ is interpreted in the indicated localization. This agrees
with Takayama's degree complex \cite{Takayama}.

We shall repeatedly use the following elementary consequence of the
definition; see also the proof of \cite[Lemma~3.1]{HaMinh} (or \cite[Theorem 1.6]{MinhTrung}).

\begin{lemma}\label{lem:negative-link}
For every monomial ideal $J\subseteq S$ and every $\mathbf a\in\mathbb Z^n$,
$$
\Delta_{\mathbf a}(J)
=
\operatorname{link}_{\Delta_{\mathbf a^+}(J)}(G_{\mathbf a}).
$$
\end{lemma}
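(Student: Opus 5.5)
The plan is to unwind the definition on both sides and compare them face by face. Write $G=G_{\mathbf a}$. Since $\mathbf a^+$ has no negative entries, $G_{\mathbf a^+}=\emptyset$, so
$$
\Delta_{\mathbf a^+}(J)=\{H\subseteq[n]: x^{\mathbf a^+}\notin JS_{x_H}\}.
$$
The link of $G$ in this complex consists of those $F$ with $F\cap G=\emptyset$ and $F\cup G\in\Delta_{\mathbf a^+}(J)$. Such an $F$ satisfies $F\subseteq[n]\setminus G$ and $x^{\mathbf a^+}\notin JS_{x_{F\cup G}}$. (If $G\notin\Delta_{\mathbf a^+}(J)$, both complexes will turn out to be void; the argument below covers this case automatically.) On the other side, $F\in\Delta_{\mathbf a}(J)$ means $F\subseteq[n]\setminus G$ and $x^{\mathbf a}\notin JS_{x_{F\cup G}}$.

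So the proof reduces to showing, for every $H\supseteq G$, that
$$
x^{\mathbf a}\in JS_{x_H}\iff x^{\mathbf a^+}\in JS_{x_H}.
$$
This holds because $x^{\mathbf a}$ and $x^{\mathbf a^+}$ differ by the factor $\prod_{i\in G}x_i^{-a_i}$. That factor is a monomial in the variables $x_i$ with $i\in G\subseteq H$, all of which are units in $S_{x_H}$. Since $JS_{x_H}$ is an ideal, multiplying by a unit preserves membership, and the equivalence follows.

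With this equivalence in hand, both sets are described by exactly the same conditions, so they are equal. This includes the degenerate case where $x^{\mathbf a^+}\in JS_{x_G}$: then $G$ is not a face, the link is void, and $\Delta_{\mathbf a}(J)$ is also void, since $F=\emptyset$ already fails.

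There is no real obstacle here. The only point needing care is the convention for the link of a non-face, which I would state explicitly to be the void complex $\varnothing$, so that the identity holds in all cases.
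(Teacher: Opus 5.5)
Your proposal is correct and is essentially the paper's proof: both show that $F\in\Delta_{\mathbf a}(J)$ if and only if $F\cup G_{\mathbf a}\in\Delta_{\mathbf a^+}(J)$, because $x^{\mathbf a}$ and $x^{\mathbf a^+}$ differ by a unit once the variables indexed by $G_{\mathbf a}$ are inverted. You spell out the unit argument and the void-link convention for the case $G_{\mathbf a}\notin\Delta_{\mathbf a^+}(J)$, both of which the paper leaves implicit.
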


\begin{proof}
A face $F\subseteq[n]\setminus G_{\mathbf a}$ belongs to the left-hand side
if and only if, after inverting the variables in $G_{\mathbf a}$, the monomial
$x^{\mathbf a^+}$ does not belong to $J S_{x_{F\cup G_{\mathbf a}}}$. This is
exactly the condition that $F\cup G_{\mathbf a}$ belongs to
$\Delta_{\mathbf a^+}(J)$.
\end{proof}

If $J\subseteq I$ are monomial ideals, then
$\Delta_{\mathbf a}(I)\subseteq\Delta_{\mathbf a}(J)$, and hence
$(\Delta_{\mathbf a}(J),\Delta_{\mathbf a}(I))$ is a relative simplicial
complex. We use the following relative Hochster--Takayama formula from
\cite[Theorem~3.2]{HaMinh}: whenever
$G_{\mathbf a}\in\Delta(\sqrt J)$,
$$
H^i_{\mathfrak m}(I/J)_{\mathbf a}
\cong
\widetilde H^{i-|G_{\mathbf a}|-1}
\bigl(\Delta_{\mathbf a}(J),\Delta_{\mathbf a}(I);k\bigr).
$$
Over the field $k$, nonvanishing of reduced relative homology and cohomology in
degree zero are equivalent. We shall therefore use $\widetilde H_0$ in the
combinatorial arguments below. We will also use the following elementary
relative-homology observation.

\begin{lemma}\label{lem:relative-zero}
	Let $X=A\cup B$ be a simplicial complex such that $A$ and $B$ each contain a
	vertex and $A\cap B=\{\emptyset\}$. If $C\subseteq A$ is a simplicial
	subcomplex, then $\widetilde H_0(X,C;k)\neq0$.
\end{lemma}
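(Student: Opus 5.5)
We need $\widetilde H_0(X,C;k)\neq0$. I will use the long exact sequence of the pair $(X,C)$ in reduced homology,
$$
\widetilde H_0(C;k)\longrightarrow\widetilde H_0(X;k)\longrightarrow\widetilde H_0(X,C;k)\longrightarrow\widetilde H_{-1}(C;k)\longrightarrow\widetilde H_{-1}(X;k),
$$
and split into two cases according to whether $C$ is the void complex $\varnothing$, the complex $\{\emptyset\}$, or has a vertex.

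\emph{Setup.} Since $A\cap B=\{\emptyset\}$, no vertex lies in both $A$ and $B$. Hence every face of $X$ lies entirely in $A$ or entirely in $B$, and no edge of $X$ joins a vertex of $A$ to a vertex of $B$. So the vertex sets $V(A)$ and $V(B)$, both nonempty, are unions of connected components of $X$. Consequently $X$ is disconnected, and the reduced $\widetilde H_0(X;k)$ contains a nonzero class: take $[v_A]-[v_B]$ with $v_A\in A$ and $v_B\in B$.

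\emph{Case $C$ has a vertex.} Here $C\subseteq A$ is nonempty, so $\widetilde H_{-1}(C)=0$, and $\widetilde H_0(X,C)$ is the cokernel of $\widetilde H_0(C)\to\widetilde H_0(X)$. I will exhibit a class not in the image. Let $\varepsilon\colon \widetilde H_0(X)\to k$ be the map sending a $0$-cycle to the sum of its coefficients on the vertices of $B$. This is well defined because boundaries of edges lie entirely in $A$ or entirely in $B$, and such boundaries have coefficient sum $0$ on $B$. Every class coming from $C\subseteq A$ has $\varepsilon=0$, while $\varepsilon([v_B]-[v_A])=1$. So the cokernel is nonzero.

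\emph{Case $C=\{\emptyset\}$ or $C=\varnothing$.} If $C=\{\emptyset\}$, the relative chain complex is $\widetilde C(X)/\widetilde C(C)$, which is the unreduced chain complex of $X$. Its $H_0$ has dimension equal to the number of components of $X$, which is at least $2$, so it is nonzero. If $C=\varnothing$, then $\widetilde H_0(X,C)=\widetilde H_0(X)\neq0$ by the setup. Alternatively, a direct chain-level argument with $\varepsilon$ handles all cases uniformly: $\widetilde H_0(X,C)$ is the quotient of the $0$-chains of $X$ by boundaries of edges and by chains supported on $C$, and $\varepsilon$ kills both while taking the value $1$ on $[v_B]$.

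There is no real obstacle here. The only care needed is the bookkeeping of the reduced and relative conventions in degree $-1$ when $C$ is $\{\emptyset\}$ or $\varnothing$, and the uniform $\varepsilon$ argument sidesteps that.
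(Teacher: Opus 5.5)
Your proof is correct and follows essentially the paper's route. You use the same split according to whether $C$ has a vertex and the same long exact sequence of the pair; your functional $\varepsilon$ is simply an explicit certificate for the paper's observation that a connected component of $X$ lying in $B$ is disjoint from $C$ and so survives modulo $C$. The only slip is in your closing ``uniform'' remark: when $C=\varnothing$, $\widetilde H_0(X,C)$ consists of augmentation-zero $0$-chains modulo boundaries, not a quotient of all $0$-chains, so there $\varepsilon$ must be evaluated on $[v_B]-[v_A]$, exactly as your separate case analysis already does.
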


\begin{proof} This observation is taken from \cite[Lemma 3.1]{MinhNakamura}. For the reader's convenience, we provide a direct proof here. No nonempty face of $X$ meets both $A$ and $B$, so no connected component of
	$X$ meets both subcomplexes. If $C$ has no vertex, then $X$ is disconnected and
	$\widetilde H_0(X;k)\neq0$. Since $\widetilde H_0(C;k)=0$, the long exact
	sequence of the pair gives an injection
	$\widetilde H_0(X;k)\to\widetilde H_0(X,C;k)$, so the latter group is nonzero.
	If $C$ has a vertex, any connected component contained in $B$ is disjoint from
	$C$ and determines a nonzero class in $H_0(X,C;k)$. Since $C$ is nonempty in
	this case, ordinary and reduced relative homology agree in degree zero.
\end{proof}

\section{Ordinary higher conormal filtration}\label{sec:ordinary}

For the rest of the paper, unless stated otherwise, let
$I=I_\Delta\subsetneq S$ be a nonzero squarefree monomial ideal. We first
record the support calculation for arbitrary windows in both filtrations.

\begin{lemma}\label{lem:support}
Let $1\le r<q$. Then
$$
\operatorname{Supp}(I^r/I^q)
=
\operatorname{Supp}(I^{(r)}/I^{(q)})
=
V(I).
$$
Consequently,
$\dim(I^r/I^q)=\dim(I^{(r)}/I^{(q)})=\dim(S/I)$.
\end{lemma}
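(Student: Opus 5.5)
We prove the two inclusions $\operatorname{Supp}\subseteq V(I)$ and $V(I)\subseteq\operatorname{Supp}$ separately, for the ordinary and symbolic windows at once. The dimension statement then follows, because the dimension of a finitely generated module equals the dimension of its support, and $\dim V(I)=\dim S/I$.

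For $\operatorname{Supp}\subseteq V(I)$, let $M$ be either $I^r/I^q$ or $I^{(r)}/I^{(q)}$. Both $I^q$ and $I^{(q)}$ contain $I^q$, so $M$ is annihilated by $I^q$. Hence
$$
\operatorname{Supp}(M)\subseteq V(I^q)=V(I).
$$
For the symbolic case we use the standard inclusion $I^q\subseteq I^{(q)}$, which holds because $I^{(q)}=\bigcap_{P\in\operatorname{Min}(I)}P^q$ for the squarefree ideal $I$.

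For $V(I)\subseteq\operatorname{Supp}$, note that supports are closed under specialization. It therefore suffices to show that $M_P\neq0$ for every minimal prime $P=(x_i:i\in F)$ of $I$. Here $F$ is the complement of a facet of $\Delta$, and $F$ may be empty only if $I=0$, which is excluded. Localizing at $P$ inverts every variable outside $F$. Since $I=\bigcap_{Q\in\operatorname{Min}(I)}Q$ and every other minimal prime $Q$ contains a variable not in $P$, we get $I_P=PS_P$. Then
$$
I^r_P=P^rS_P,\qquad I^{(r)}_P=P^rS_P,
$$
and the same holds with $q$ in place of $r$. For the symbolic powers this uses that $P^{(r)}=P^r$ for a prime generated by variables. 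So in both cases
$$
M_P\cong P^rS_P/P^qS_P.
$$
This module is nonzero, because for any $i\in F$ the element $x_i^r$ lies in $P^r$ but not in $P^qS_P$, as $q>r$. Equivalently, $S_P$ is a regular local ring of dimension $|F|\ge1$, so $P^r\ne P^q$ by Krull's intersection theorem or by a direct degree count.

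The only step needing care is the identification $I_P=PS_P$, together with its symbolic analogue, at a minimal prime. This is the main point, but it is routine: $P$-primary components survive localization at $P$, and all other components become the unit ideal. No reliance on the earlier lemmas is needed.
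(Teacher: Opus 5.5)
Your proof is correct and follows essentially the same route as the paper. You get $\operatorname{Supp}\subseteq V(I)$ because $I^q$ annihilates both windows, where the paper instead notes that $I^r$, $I^q$, $I^{(r)}$, $I^{(q)}$ all become $S_Q$ after localizing at $Q\notin V(I)$. The reverse inclusion is the same as the paper's: localize at a minimal prime $P$ to obtain $P^rS_P/P^qS_P\neq0$, then use that supports are closed under specialization.
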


\begin{proof} Let $Q \in \operatorname{Spec } S \setminus V(I)$. Then, $I \not\subseteq Q$. Choose $f\in I\setminus Q$. Since
$f^r\in I^r\subseteq I^{(r)}$ and $f^q\in I^q\subseteq I^{(q)}$,
localization at $Q$ makes each of $I^r$, $I^q$, $I^{(r)}$, and $I^{(q)}$
equal to $S_Q$. Thus, both quotients vanish after localization at $Q$.

Conversely, let $P\in\operatorname{Min}(I)$. Since $I$ is radical,
$I_P=PS_P$, while symbolic powers commute with localization at a minimal
prime. Hence,
$$
(I^r/I^q)_P\cong P^rS_P/P^qS_P
$$
and
$$
(I^{(r)}/I^{(q)})_P\cong P^rS_P/P^qS_P.
$$
These modules are nonzero because $PS_P$ is a nonzero maximal ideal of the regular local ring $S_P$ and its powers are strict. Thus, every minimal prime of $I$
belongs to both supports. Since support is specialization closed, both supports
are $V(I)$.
\end{proof}

The only pair $1\le r<q$ with $q<3$ is $(r,q)=(1,2)$. At this exceptional
level, ordinary Cohen--Macaulayness forces the ordinary and symbolic squares
to coincide.

\begin{proposition}\label{prop:second-symbolic}
Let $I\subsetneq S$ be a nonzero squarefree monomial ideal. If $I/I^2$ is
Cohen--Macaulay, then
$$
I^2=I^{(2)}.
$$
In particular, $I/I^{(2)}$ is Cohen--Macaulay.
\end{proposition}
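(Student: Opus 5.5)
The plan is to use the fact that a Cohen--Macaulay module has no embedded associated primes, together with the fact that $I^{(2)}/I^2$ is a submodule of $I/I^2$ which vanishes at every minimal prime of $I$. Everything stays in the graded setting, where a finitely generated graded $S$-module $M$ that is Cohen--Macaulay satisfies $\dim S/P=\dim M$ for every $P\in\operatorname{Ass}(M)$. In particular, $\operatorname{Ass}(M)=\operatorname{Min}(M)$.

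First, by Lemma~\ref{lem:support} applied with $(r,q)=(1,2)$, $\operatorname{Supp}(I/I^2)=V(I)$, so $\operatorname{Min}(I/I^2)=\operatorname{Min}(I)$. If $I/I^2$ is Cohen--Macaulay, the unmixedness recalled above gives
$$
\operatorname{Ass}(I/I^2)=\operatorname{Min}(I).
$$

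Next, consider the graded submodule $N=I^{(2)}/I^2\subseteq I/I^2$; it is a submodule because $I^{(2)}\subseteq I$. For each $P\in\operatorname{Min}(I)$, symbolic powers commute with localization at minimal primes, and $I_P=PS_P$ because $I$ is radical. Hence $I^{(2)}_P=P^2S_P=I^2_P$ and $N_P=0$. On the other hand, $\operatorname{Ass}(N)\subseteq\operatorname{Ass}(I/I^2)=\operatorname{Min}(I)$. If $N\neq0$, it would have an associated prime $P\in\operatorname{Min}(I)$, forcing $N_P\neq0$, which is a contradiction. Therefore $N=0$, that is, $I^2=I^{(2)}$. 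The final assertion then follows at once, since $I/I^{(2)}=I/I^2$.

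The only point needing care is the unmixedness of Cohen--Macaulay modules, namely that every associated prime has the full dimension $\dim M$. This is standard: for graded $M$ one may localize at $\mathfrak m$, and in the local case it follows from $\operatorname{depth}M\le\dim S/P$ for every $P\in\operatorname{Ass}(M)$. Beyond citing this fact, the argument requires no combinatorics and does not use the relative Hochster--Takayama formula.
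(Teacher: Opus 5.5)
Your proof is correct and takes essentially the same route as the paper. You use unmixedness of the Cohen--Macaulay module $I/I^2$ together with Lemma~\ref{lem:support} to get $\operatorname{Ass}_S(I/I^2)=\operatorname{Min}(I)$, and then you observe that the submodule $I^{(2)}/I^2$ vanishes at every minimal prime, so it must be zero. Your short justification of unmixedness in the graded setting (localize at $\mathfrak m$ and use $\operatorname{depth} M\le\dim S/P$ for $P\in\operatorname{Ass}(M)$) makes explicit a standard fact the paper only cites.
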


\begin{proof}
Set $M=I/I^2$ and $d=\dim(S/I)$. By Lemma~\ref{lem:support},
$\operatorname{Supp}(M)=V(I)$ and $\dim M=d$. Since $M$ is a finitely
generated Cohen--Macaulay $S$-module, it is unmixed; equivalently,
$\operatorname{Ass}_S(M)=\operatorname{Assh}_S(M)$. Hence,
$$
\dim S/Q=d\text{ for every }Q\in\operatorname{Ass}_S(M).
$$
Every minimal prime of $I$ is a minimal prime of $\operatorname{Supp}(M)$,
and hence belongs to $\operatorname{Ass}_S(M)$. We claim that
$$
\operatorname{Ass}_S(M)=\operatorname{Min}(I).
$$
Indeed, let $Q\in\operatorname{Ass}_S(M)$. Since $Q\in V(I)$, there is
$P\in\operatorname{Min}(I)$ with $P\subseteq Q$. Both $P$ and $Q$ belong
to $\operatorname{Ass}_S(M)$, so $\dim S/P=\dim S/Q=d$. A strict
inclusion of primes in $S$ strictly decreases dimension, and therefore $P=Q$.

Set $T=I^{(2)}/I^2\subseteq M$. For every $P\in\operatorname{Min}(I)$,
radicality gives $I_P=PS_P$, and symbolic localization gives
$$
(I^{(2)})_P=P^2S_P=(I_P)^2=(I^2)_P.
$$
Thus, $T_P=0$ for every $P\in\operatorname{Min}(I)$. If $T\neq0$, choose
$Q\in\operatorname{Ass}_S(T)$. Since $T\subseteq M$,
$\operatorname{Ass}_S(T)\subseteq\operatorname{Ass}_S(M)=\operatorname{Min}(I)$.
Hence, $Q$ is a minimal prime of $I$, but $Q\in\operatorname{Ass}_S(T)$
implies $T_Q\neq0$, a contradiction. Therefore $T=0$, so
$I^2=I^{(2)}$. The final assertion follows because then
$I/I^{(2)}=I/I^2$.
\end{proof}

For upper exponent at least three, the following local configuration is the
basic obstruction for every ordinary window.

\begin{lemma}\label{lem:three-vertex-obstruction}
Let $1\le r<q$ with $q\ge3$. Suppose that, after relabeling three vertices
of $\Delta$,
$$
\{1\},\{2,3\}\in\Delta
\text{ and }
\{1,2\},\{1,3\}\notin\Delta.
$$
Then, $I^r/I^q$ is not Cohen--Macaulay.
\end{lemma}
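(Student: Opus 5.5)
The plan is to exhibit a single multidegree $\mathbf a\in\mathbb Z^n$ for which $H^i_{\mathfrak m}(I^r/I^q)_{\mathbf a}\neq0$ with $i<\dim(I^r/I^q)$. By Lemma~\ref{lem:support}, $\dim(I^r/I^q)=\dim S/I=:d$, so this rules out Cohen--Macaulayness. The relevant multidegree is $\mathbf b=(q-1)\mathbf e_1+\mathbf e_2+\mathbf e_3$, twisted negatively along a suitable face $G$.

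\textbf{Choice of $\mathbf a$.} Choose $G\subseteq[n]\setminus\{1,2,3\}$ maximal with respect to the property that both $G\cup\{1\}$ and $G\cup\{2,3\}$ are faces of $\Delta$. Such a $G$ exists because $G=\emptyset$ qualifies. Set $\mathbf a=\mathbf b-\mathbf e_G$, so that $G_{\mathbf a}=G$ and $\mathbf a^+=\mathbf b$.

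Since $G\in\Delta=\Delta(\sqrt{I^q})$, the relative Hochster--Takayama formula applies and gives
$$
H^{|G|+1}_{\mathfrak m}(I^r/I^q)_{\mathbf a}\cong\widetilde H^{0}\bigl(\Delta_{\mathbf a}(I^q),\Delta_{\mathbf a}(I^r);k\bigr).
$$
The dimension count is immediate: $G\cup\{2,3\}\in\Delta$ has $|G|+2$ elements, so $d\ge|G|+2>|G|+1$. It therefore suffices to show that this relative $\widetilde H_0$ is nonzero.

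\textbf{Describing the degree complexes.} By Lemma~\ref{lem:negative-link}, $\Delta_{\mathbf a}(J)=\operatorname{link}_{\Delta_{\mathbf b}(J)}(G)$ for $J=I^q$ and $J=I^r$. I will compute the relevant faces directly. For $F$ disjoint from $G$, let $H=F\cup G$. Inverting $x_H$ turns $IS_{x_H}$ into the extension of the Stanley--Reisner ideal of $\operatorname{link}_\Delta(H)$ (this is zero or the unit ideal when $H\notin\Delta$). The question is whether $x_1^{q-1}x_2x_3$ lies in the $q$-th (respectively $r$-th) power of that ideal. A squarefree monomial dividing $x_1^{q-1}x_2x_3$ lies in $\{1,x_1,x_2,x_3,x_1x_2,x_1x_3,x_2x_3,x_1x_2x_3\}$, and we need a product of $q$ such generators dividing $x^{\mathbf b}$. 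I will split into cases according to $F\cap\{1,2,3\}$.

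\emph{Case $F\ni 1$.} Here $x_1$ is a unit, and the conditions $\{1,2\},\{1,3\}\notin\Delta$ force $x_2,x_3\in I S_{x_H}$. Hence $x_2x_3\cdot(\text{unit})\in I^2$, which is not enough for $I^q$ with $q\ge3$ unless further variables lie in the ideal.

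\emph{Case $F\supseteq\{2,3\}$ (or $F\ni2$).} Here $x_1$ lies in the localized ideal, so $x_1^{q-1}\in I^{q-1}$, again one factor short of $I^q$.

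\emph{Case $F\cap\{1,2,3\}=\emptyset$.} By maximality of $G$, if $F\neq\emptyset$ then $H\cup\{1\}\notin\Delta$ or $H\cup\{2,3\}\notin\Delta$. I expect this case to give $x^{\mathbf b}\in I^qS_{x_H}$, so that these $F$ are not faces, using $q\ge3$ in the form $x_1^{q-1}\cdot x_2x_3$ or $x_1\cdots x_1\cdot x_2\cdot x_3$.

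\textbf{Conclusion and main obstacle.} The expected outcome is a decomposition $\Delta_{\mathbf a}(I^q)=A\cup B$, where $A$ consists of the faces containing $1$ (together with $\emptyset$) and $B$ of the faces meeting $\{2,3\}$. Each of $A$ and $B$ contains a vertex, namely $\{1\}$ and $\{2\}$; and $A\cap B=\{\emptyset\}$, because $\{1,2\},\{1,3\}\notin\Delta$. On the other side, $\Delta_{\mathbf a}(I^r)\subseteq A$: with $r<q$, near $\{2,3\}$ the element $x_1^{q-1}\in I^{q-1}\supseteq I^r$ lies in $I^r$, so faces meeting $\{2,3\}$ disappear. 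Lemma~\ref{lem:relative-zero} then gives the nonvanishing of $\widetilde H_0$.

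The main obstacle is the careful bookkeeping of the cases, especially two points. First, faces $F$ meeting only one of $2,3$, or containing extra vertices outside $\{1,2,3\}$, must fall cleanly into $A$ or $B$ or be excluded. Second, the inclusion $\Delta_{\mathbf a}(I^r)\subseteq A$ must hold uniformly in $r$ (including $r=1$, where membership in $I$ is easier). If the full clean description of $A$ and $B$ fails, I will fall back on checking only what Lemma~\ref{lem:relative-zero} actually needs: a vertex on each side, no edge or face joining $1$ with $2$ or $3$, and no vertex of $\Delta_{\mathbf a}(I^r)$ lying in the $\{2,3\}$-component.
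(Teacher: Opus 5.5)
Your architecture is the paper's: the same $\mathbf b=(q-1)\mathbf e_1+\mathbf e_2+\mathbf e_3$, the same $G$ (a maximal face of $\operatorname{link}_\Delta(1)\cap\operatorname{link}_\Delta(\{2,3\})$), the same twist, cohomological index $|G|+1$, dimension count, and endgame via Lemma~\ref{lem:relative-zero}. The gap is in the only step with real content, the splitting of $\Delta_{\mathbf a}(I^q)$, which you leave as an expectation; that expectation is wrong.

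\textbf{The wrong expectation.} In your case $F\cap\{1,2,3\}=\emptyset$, $F\neq\emptyset$, you predict $x^{\mathbf b}\in I^qS_{x_H}$ as soon as \emph{one} of $H\cup\{1\}$, $H\cup\{2,3\}$ is a nonface. This is false if $H\cup\{1\}\in\Delta$. Then no localized generator dividing $x^{\mathbf b}$ is a unit or involves only $x_1$, so each of the $q$ factors needs $x_2$ or $x_3$. That is impossible, since $x^{\mathbf b}$ has degree $2<q$ in $x_2,x_3$. Concretely, take $\Delta=\langle\{1,4,5\},\{2,3,4\}\rangle$. Then $G=\{4\}$ and $\Delta_{\mathbf a}(I^q)=\langle\{1,5\},\{2,3\}\rangle$, so the vertex $5\notin\{1,2,3\}$ survives even though $\{2,3,4,5\}\notin\Delta$. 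Thus $A$ and $B$ are not ``faces containing $1$'' and ``faces meeting $\{2,3\}$''; those collections are not even closed under subsets. Your case $F\ni 2$, $3\notin F$ is also off. If $H\cup\{3\}\notin\Delta$, then $x_3\in IS_{x_H}$ and $x_1^{q-1}x_3\in I^qS_{x_H}$, so you are not ``one factor short.''

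\textbf{Why the fallback does not rescue it.} Because such extra vertices exist, disjointness of the two pieces does not follow from $\{1,2\},\{1,3\}\notin\Delta$. Likewise, your fallback criterion (no face joining $1$ with $2$ or $3$) does not give disconnection: a vertex $v\notin\{1,2,3\}$ lying in both pieces yields a path from $1$ to $2$ through $v$. In the same example with the non-maximal choice $G=\emptyset$, no face contains $1$ together with $2$ or $3$. Yet $\Delta_{\mathbf b}(I^q)=\Delta$ is a cone with apex $4$, and the relative $\widetilde H_0$ vanishes.

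\textbf{What is missing.} You need the paper's computation:
\begin{enumerate}[label=(\alph*)]
\item $\Delta_{\mathbf b}(I^q)=\operatorname{star}_\Delta(1)\cup\operatorname{star}_\Delta(\{2,3\})$. The reverse inclusion uses that outside \emph{both} stars, $x_1$ and one of $x_2,x_3,x_2x_3$ lie in $IS_{x_H}$.
\item $\Delta_{\mathbf b}(I^r)\subseteq\operatorname{star}_\Delta(1)$. Off this star, a minimal nonface inside $H\cup\{1\}$ puts $x_1$ in $IS_{x_H}$, and $x_1^r\mid x^{\mathbf b}$. This also removes the vertices of $B$ outside $\{2,3\}$, which your remark that ``faces meeting $\{2,3\}$ disappear'' does not cover.
\item Pass to links at $G$ via Lemma~\ref{lem:negative-link}. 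A nonempty common face $W$ of $A=\operatorname{link}_{\operatorname{star}_\Delta(1)}(G)$ and $B=\operatorname{link}_{\operatorname{star}_\Delta(\{2,3\})}(G)$ must avoid $\{1,2,3\}$. Hence $G\cup W\in\operatorname{link}_\Delta(1)\cap\operatorname{link}_\Delta(\{2,3\})$, contradicting maximality of $G$.
\end{enumerate}
Maximality of $G$ is used exactly in (c), to separate the vertex sets of the two pieces. It is not what excludes vertices outside $\{1,2,3\}$, as your case analysis assumes.
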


\begin{proof}
Set $\mathbf b=(q-1)\mathbf e_1+\mathbf e_2+\mathbf e_3$. We first claim
that
$$
\Delta_{\mathbf b}(I^q)
=
\operatorname{star}_\Delta(1)
\cup
\operatorname{star}_\Delta(\{2,3\}).
$$
Let $F\in\operatorname{star}_\Delta(1)$. If
$x^{\mathbf b}=x_1^{q-1}x_2x_3$ belonged to $I^qS_{x_F}$, then there would
be $q$ minimal monomial generators of $I$ whose product divides
$x^{\mathbf b}$ after the variables indexed by $F$ are declared units.
Since $F\in\Delta$, none of these generators can become a unit. Moreover,
the noninvertible part of each generator must be divisible by $x_2$ or $x_3$;
otherwise its support would give a nonface contained in $F\cup\{1\}$. This is impossible because
$x^{\mathbf b}$ has total degree two in $x_2,x_3$ and $q\ge3$. Hence
$F\in\Delta_{\mathbf b}(I^q)$.

If $F\in\operatorname{star}_\Delta(\{2,3\})$, the same argument shows
that the noninvertible part of each of the $q$ factors must be divisible by
$x_1$. Their product would then be divisible by $x_1^q$, contradicting the
exponent $q-1$ of $x_1$ in $x^{\mathbf b}$. Thus, the second star is also
contained in $\Delta_{\mathbf b}(I^q)$.

Conversely, suppose that $F$ belongs to neither star. If $F\notin\Delta$,
then $IS_{x_F}=S_{x_F}$, so $F\notin\Delta_{\mathbf b}(I^q)$. Assume
$F\in\Delta$. Since $F\cup\{1\}\notin\Delta$, a minimal nonface
contained in $F\cup\{1\}$ contains $1$, and localization at $F$ gives
$x_1\in IS_{x_F}$. Similarly, since
$F\cup\{2,3\}\notin\Delta$, one of $x_2,x_3,x_2x_3$ belongs to
$IS_{x_F}$. Taking $q-1$ copies of $x_1$ and one copy of this last monomial
shows that $x^{\mathbf b}\in I^qS_{x_F}$. This proves the claimed equality.

We next claim that
$$
\Delta_{\mathbf b}(I^r)\subseteq\operatorname{star}_\Delta(1).
$$
Indeed, let $F\notin\operatorname{star}_\Delta(1)$. If
$F\notin\Delta$, then $I^rS_{x_F}=S_{x_F}$. If $F\in\Delta$, the same
minimal-nonface argument gives $x_1\in IS_{x_F}$, so
$x_1^r\in I^rS_{x_F}$. Since $r\le q-1$, the monomial $x_1^r$ divides
$x^{\mathbf b}$. In either case $F\notin\Delta_{\mathbf b}(I^r)$,
proving the inclusion.

Set
$$
L=\operatorname{link}_\Delta(1)
\cap
\operatorname{link}_\Delta(\{2,3\}),
$$
choose a maximal face $U$ of $L$, and put
$\mathbf a=\mathbf b-\sum_{i\in U}\mathbf e_i$. Then
$G_{\mathbf a}=U$ and $\mathbf a^+=\mathbf b$. By
Lemma~\ref{lem:negative-link},
$$
\Delta_{\mathbf a}(I^q)=A\cup B,
$$
where
$$
A=\operatorname{link}_{\operatorname{star}_\Delta(1)}(U)
\text{ and }
B=\operatorname{link}_{\operatorname{star}_\Delta(\{2,3\})}(U).
$$
The complex $A$ contains the vertex $1$, while $B$ contains the edge
$\{2,3\}$. Moreover, $A\cap B=\{\emptyset\}$. Indeed, if $W$ were a
nonempty face of $A\cap B$, then
$U\cup W\cup\{1\}\in\Delta$ forces $2,3\notin W$, while
$U\cup W\cup\{2,3\}\in\Delta$ forces $1\notin W$. Hence,
$U\cup W\in L$, contradicting the maximality of $U$.

The inclusion above and Lemma~\ref{lem:negative-link} give
$\Delta_{\mathbf a}(I^r)\subseteq A$. Therefore,
Lemma~\ref{lem:relative-zero} yields
$$
\widetilde H_0
\bigl(\Delta_{\mathbf a}(I^q),\Delta_{\mathbf a}(I^r);k\bigr)
\neq0.
$$
Since $U\in\Delta=\Delta(\sqrt{I^q})$, the relative
Hochster--Takayama formula applies and gives
$$
H_{\mathfrak m}^{|U|+1}(I^r/I^q)_{\mathbf a}\neq0.
$$
Since $U\cup\{2,3\}\in\Delta$, one has
$|U|+2\le\dim\Delta+1$. By Lemma~\ref{lem:support},
$$
|U|+1\le\dim\Delta<\dim\Delta+1=\dim(I^r/I^q).
$$
Thus, $I^r/I^q$ is not Cohen--Macaulay.
\end{proof}

For edge ideals, the obstruction gives a concrete classification for all
power-filtration windows.

\begin{theorem}\label{CM-edge-ideals}
Let $G$ be a finite simple graph with at least one edge, let
$S=k[x_1,\ldots,x_n]$, and let $I=I(G)$. Fix integers $1\le r<q$ with
$q\ge3$. Then, the following are equivalent:
\begin{enumerate}
\item $I^r/I^q$ is Cohen--Macaulay;
\item $I$ is a complete intersection;
\item after removing isolated vertices, $G$ is a matching.
\end{enumerate}
Consequently, if $I^r/I^q$ is Cohen--Macaulay for one such pair $(r,q)$,
then $I^a/I^b$ is Cohen--Macaulay for every $1\le a<b$.
\end{theorem}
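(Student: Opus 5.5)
I will prove the cycle (2)$\Rightarrow$(1)$\Rightarrow$(3)$\Rightarrow$(2), and then deduce the final assertion from (2)$\Rightarrow$(1), which is valid for every pair $1\le a<b$.

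For (3)$\Rightarrow$(2): after deleting isolated vertices, $I$ is generated by monomials $x_{i}x_{j}$ in pairwise disjoint sets of variables. These generators form a regular sequence, so $I$ is a complete intersection.

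For (2)$\Rightarrow$(1): let $I$ be generated by a regular sequence $f_1,\dots,f_c$. Then $\operatorname{gr}_I(S)\cong (S/I)[T_1,\dots,T_c]$ is a polynomial ring, and each $I^t/I^{t+1}$ is a free $S/I$-module. Since $S/I$ is a complete intersection, it is Cohen--Macaulay of dimension $d=\dim S/I$. Now $I^a/I^b$ has a finite filtration whose factors are $I^t/I^{t+1}$ with $a\le t<b$, each Cohen--Macaulay of dimension $d$. The Depth Lemma, applied inductively along the exact sequences $0\to I^{t+1}/I^b\to I^t/I^b\to I^t/I^{t+1}\to0$, gives $\operatorname{depth} I^a/I^b\ge d$. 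By Lemma~\ref{lem:support}, $\dim I^a/I^b=d$, so $I^a/I^b$ is Cohen--Macaulay. This argument uses no edge-ideal hypothesis, so it also yields the final assertion.

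For (1)$\Rightarrow$(3): this is the combinatorial step and the only real obstacle. I argue by contraposition using Lemma~\ref{lem:three-vertex-obstruction}. Suppose $G$, with isolated vertices removed, is not a matching. Then some vertex has degree at least two, so there is a path $2-1-3$ in $G$. Here $\Delta$ is the independence complex of $G$.

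\emph{Case $\{2,3\}\notin E(G)$.} Then $\{1\}\in\Delta$ and $\{2,3\}\in\Delta$, while $\{1,2\},\{1,3\}\notin\Delta$. This is exactly the hypothesis of Lemma~\ref{lem:three-vertex-obstruction}, so $I^r/I^q$ is not Cohen--Macaulay.

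\emph{Case $\{2,3\}\in E(G)$.} Then $G$ contains a triangle on $\{1,2,3\}$, and I need a different labelled triple. I look for a vertex $u$ and a non-edge $\{v,w\}$ with $u$ adjacent to both $v$ and $w$, that is, an induced path $P_3$ in some connected component of $G$. Such a path exists unless every connected component of $G$ is a complete graph.

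\emph{Residual case: every component is complete.} Then some component is $K_m$ with $m\ge3$. I would then aim for a direct argument rather than Lemma~\ref{lem:three-vertex-obstruction}. The cleanest route is to localize at the minimal prime $P$ of $I$ that contains all the variables of the other components except one variable $x_i$ of the $K_m$. After inverting $x_i$, the ideal $I$ localizes to a prime, which is not what I want. So I would instead adapt the proof of Lemma~\ref{lem:three-vertex-obstruction}: take the multidegree $\mathbf b=(q-1)\mathbf e_1+\mathbf e_2+\mathbf e_3$ on a triangle $\{1,2,3\}$, compute $\Delta_{\mathbf b}(I^q)$, and use Lemma~\ref{lem:relative-zero}.

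The triangle itself is not such a configuration, since $\{2,3\}\notin\Delta$. If the residual case resists this adaptation, my fallback is to invoke the general theorem for arbitrary squarefree ideals proved later via Terai--Yoshida. I expect the triangle/complete-component case to be the main difficulty.
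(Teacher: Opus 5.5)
You have proved (3)$\Rightarrow$(2), (2)$\Rightarrow$(1) and the final assertion correctly. The first part of (1)$\Rightarrow$(3) is also correct, as in the paper: a non-complete component contains an induced path $2-1-3$, and Lemma~\ref{lem:three-vertex-obstruction} applies.

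The gap is the residual case: every component is complete and some component is $K_\ell$ with $\ell\ge3$. You leave it unproved, and neither of your routes can close it.

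\emph{The two-star adaptation cannot work.} Take $G=K_\ell$ alone. Then $\dim(I^r/I^q)=1$, so failure of Cohen--Macaulayness means exactly $H^0_{\mathfrak m}(I^r/I^q)\neq0$. Under the relative Hochster--Takayama formula this is $\widetilde H^{-1}$ with $G_{\mathbf a}=\emptyset$. A degree-zero class from Lemma~\ref{lem:relative-zero} only gives nonvanishing of $H^{|G_{\mathbf a}|+1}_{\mathfrak m}$ with $|G_{\mathbf a}|+1\ge1$, which is not below the dimension. Concretely, for $K_3$ and your $\mathbf b$ one gets $\Delta_{\mathbf b}(I^q)=\{\emptyset,\{1\}\}$, which is acyclic.

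\emph{The fallback is circular.} The proof of Theorem~\ref{thm:ordinary} invokes Theorem~\ref{CM-edge-ideals} in its cases $\dim\Delta\le1$. Its case $\dim\Delta=0$ is literally $I=I(K_m)$, which is exactly your residual case.

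The missing idea is an embedded-associated-prime argument. Since $\operatorname{Supp}(M)=V(I)$ and $M=I^r/I^q$ is Cohen--Macaulay, $M$ is unmixed, so $\operatorname{Ass}_S(M)=\operatorname{Min}(I)$. You were right to localize, but at a minimal prime $I_P$ becomes prime and the obstruction disappears.

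Instead, localize at the non-minimal prime $P$ generated by all vertices of $C=K_\ell$ together with all but one vertex of each other nontrivial component. Then $I_P=J+L$, where $J=I(K_\ell)$ and $L$ is generated by variables on disjoint support. Choose distinct $x,y,z\in C$:
\begin{itemize}
\item if $L=0$, put $u=(xy)^{q-2}xyz$;
\item if $L\neq0$, put $u=xyz\,w^{q-2}$ with $w$ a variable of $L$.
\end{itemize}
These elements have three properties:
\begin{itemize}
\item $u\in I_P^{q-1}\subseteq I_P^r$.
\item $u\notin I_P^q$. For $L=0$ this holds because $\deg u=2q-1<2q$. For $L\neq0$, membership $u\in J^aL^b$ forces $b\le q-2$, hence $a\ge2$ and $xyz\in J^2$, which is impossible by degree.
\item $PS_P\,u\subseteq I_P^q$.
\end{itemize}
Hence $PS_P\in\operatorname{Ass}(M_P)$, so $P\in\operatorname{Ass}_S(M)$. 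This contradicts unmixedness, because $P$ contains all $\ell$ vertices of $K_\ell$ while every minimal vertex cover contains only $\ell-1$ of them.
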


\begin{proof}
Let $\Delta=\operatorname{Ind}(G)$ and set $M=I^r/I^q$. Assume first that
$M$ is Cohen--Macaulay. If some connected component of $G$ were not complete,
it would contain an induced path $2-1-3$. Then,
$$
\{1\},\{2,3\}\in\Delta
\text{ and }
\{1,2\},\{1,3\}\notin\Delta,
$$
contradicting Lemma~\ref{lem:three-vertex-obstruction}. Hence, every connected
component of $G$ is complete. After removing isolated vertices, write
$$
G=K_{r_1}\sqcup\cdots\sqcup K_{r_s},\text{ } r_i\ge2.
$$
All minimal vertex covers of $G$ have cardinality
$\sum_{i=1}^s(r_i-1)$, so $I$ is unmixed. By Lemma~\ref{lem:support},
$\operatorname{Supp}(M)=V(I)$ and $\dim M=\dim(S/I)$. Since $M$ is
Cohen--Macaulay, every associated prime of $M$ has dimension $\dim(S/I)$.
Moreover, every minimal prime of $I$ is a minimal prime of
$\operatorname{Supp}(M)$ and hence belongs to $\operatorname{Ass}_S(M)$.
If $Q\in\operatorname{Ass}_S(M)$ and $P\in\operatorname{Min}(I)$ satisfies
$P\subseteq Q$, then $\dim S/P=\dim S/Q=\dim(S/I)$, so $P=Q$. Therefore,
$$
\operatorname{Ass}_S(M)=\operatorname{Min}(I).
$$

Suppose that some component $C$ is $K_\ell$ with $\ell\ge3$, and choose
distinct vertices $x,y,z$ of $C$. For every other nontrivial complete component, omit
one vertex, and let $P$ be the monomial prime generated by all vertices of
$C$ together with all nonomitted vertices in the other complete components.
No isolated vertex is included in $P$. After localizing at $P$,
$$
I_P=J+L,
$$
where $J=I(K_\ell)$ and $L$ is generated by variables from the other complete
components. The variables supporting $J$ and $L$ are disjoint, and $PS_P$ is
the maximal ideal generated by the variables of $C$ and those generating $L$.

Assume first that $L\neq0$, choose a variable $w\in L$, and set
$$
u=xyz\,w^{q-2}.
$$
Since $xyz\in J$, one has
$u\in JL^{q-2}\subseteq I_P^{q-1}\subseteq I_P^r$. On the other hand,
$u\notin I_P^q$. Indeed,
$$
(J+L)^q=\sum_{a+b=q}J^aL^b.
$$
If $u\in J^aL^b$, then $b\le q-2$, and hence $a\ge2$. Since the
variables supporting $J$ and $L$ are disjoint, this would force the $C$-part
$xyz$ of $u$ to belong to $J^a\subseteq J^2$, which is impossible because
every monomial in $J^2$ has degree at least four.

For every variable $v$ of $C$, one has $vxyz\in J^2$, so
$vu\in J^2L^{q-2}\subseteq I_P^q$. If $t$ is a variable generating $L$,
then $xyz\in J$ and $tw^{q-2}\in L^{q-1}$, so
$tu\in JL^{q-1}\subseteq I_P^q$. Hence,
$I_P^q:u$ contains the maximal ideal $PS_P$. Since $u\notin I_P^q$, this
colon is proper, and therefore
$$
I_P^q:u=PS_P.
$$
Thus, the class of $u$ in $M_P=I_P^r/I_P^q$ has annihilator $PS_P$, so
$PS_P\in\operatorname{Ass}_{S_P}(M_P)$.

If $L=0$, set
$$
u=(xy)^{q-2}xyz.
$$
Since $xyz\in J$, one has $u\in J^{q-1}\subseteq J^r$. Its degree is
$2q-1$, whereas every monomial in $J^q$ has degree at least $2q$, so
$u\notin J^q$. For every variable $v$ of $C$,
$$
xu=(xy)^{q-1}(xz),\text{ } yu=(xy)^{q-1}(yz),
$$
$$
zu=(xy)^{q-2}(xz)(yz),
$$
and, if $v\notin\{x,y,z\}$,
$$
vu=(xy)^{q-2}(xv)(yz).
$$
Thus, $J^q:u=PS_P$, and again $PS_P\in\operatorname{Ass}_{S_P}(M_P)$.

By the localization formula for associated primes, there is
$Q\in\operatorname{Ass}_S(M)$ with $Q\subseteq P$ and
$QS_P=PS_P$. Contracting to $S$ gives $Q=P$, so
$P\in\operatorname{Ass}_S(M)$. But $P$ is not minimal over $I$,
because it contains every vertex of $C$, whereas a minimal vertex cover of
$C=K_\ell$ contains only $\ell-1$ vertices. This contradicts
$\operatorname{Ass}_S(M)=\operatorname{Min}(I)$. Hence, every nontrivial
component of $G$ is $K_2$, proving $(1)\Rightarrow(3)$.

The equivalence of (2) and (3) follows from the characterization of monomial
complete intersections by pairwise disjoint supports of their minimal
generators. If $I$ is a complete intersection, then
$\operatorname{gr}_I(S)\cong\operatorname{Sym}_{S/I}(I/I^2)$; in particular,
each $I^t/I^{t+1}$ is a finite free $S/I$-module. It follows inductively from
$0\to I^t/I^{t+1}\to S/I^{t+1}\to S/I^t\to0$ that $S/I^t$ is
Cohen--Macaulay of dimension $\dim(S/I)$ for every $t\ge1$. The exact
sequence
$$
0\longrightarrow I^r/I^q\longrightarrow S/I^q\longrightarrow S/I^r
\longrightarrow0
$$
together with the Depth Lemma and Lemma~\ref{lem:support} shows that
$I^r/I^q$ is Cohen--Macaulay. This
proves $(2)\Rightarrow(1)$. The final assertion follows because a complete
intersection has Cohen--Macaulay quotients $I^a/I^b$ for all $1\le a<b$.
\end{proof}

We prepare the passage from edge ideals to arbitrary squarefree monomial
ideals. The next lemma removes linear generators from an arbitrary window,
while the following one reduces localizations to Stanley--Reisner ideals of
vertex links.

\begin{lemma}\label{lem:linear-generators}
Let $A$ be a polynomial ring over $k$, let $R=A[y_1,\ldots,y_s]$, and let
$0\neq K\subsetneq A$ be a squarefree monomial ideal. Set
$J=(y_1,\ldots,y_s)+K$. Fix $1\le r<q$. If $J^r/J^q$ is Cohen--Macaulay as
an $R$-module, then $K^r/K^q$ is Cohen--Macaulay as an $A$-module.
\end{lemma}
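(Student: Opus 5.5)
The plan is to compare multigraded pieces of local cohomology through the relative Hochster--Takayama formula, using multidegrees whose $y$-coordinates are all zero. Index the variables of $R$ by $[n]\sqcup\{y_1,\ldots,y_s\}$, where $A=k[x_1,\ldots,x_n]$. For $\mathbf a\in\mathbb Z^n$ write $(\mathbf a,\mathbf 0)\in\mathbb Z^{n+s}$. The key observation is
$$
J^t=\sum_{u+v=t}(y_1,\ldots,y_s)^uK^v .
$$
Hence the component of $J^t$ (and of its localizations at monomials in the $x$'s) of $y$-degree $\mathbf 0$ is exactly $K^t$ (resp.\ its localization).

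First I will identify the degree complexes. Fix $\mathbf a\in\mathbb Z^n$ and $t\ge1$. Then $G_{(\mathbf a,\mathbf 0)}=G_{\mathbf a}\subseteq[n]$. Any face $F$ containing some $y_j$ inverts $y_j\in J$, so $J^tR_{x_{F\cup G}}$ is the unit ideal and $F\notin\Delta_{(\mathbf a,\mathbf 0)}(J^t)$. For $F\subseteq[n]\setminus G_{\mathbf a}$, the monomial $x^{\mathbf a}$ has $y$-degree $\mathbf 0$. So $x^{\mathbf a}\in J^tR_{x_{F\cup G_{\mathbf a}}}$ if and only if $x^{\mathbf a}\in K^tA_{x_{F\cup G_{\mathbf a}}}$. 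This gives
$$
\Delta_{(\mathbf a,\mathbf 0)}(J^t)=\Delta_{\mathbf a}(K^t)\quad\text{for }t=r,q .
$$
The same argument shows $\Delta(\sqrt{J})=\Delta(J)=\Delta(K)$ as complexes on $[n]\sqcup\{y\}$, since no $y_j$ is a vertex. In particular, the hypothesis $G_{(\mathbf a,\mathbf 0)}\in\Delta(\sqrt{J^q})$ is equivalent to $G_{\mathbf a}\in\Delta(\sqrt{K^q})$.

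Next, apply the relative Hochster--Takayama formula to both modules. For every $\mathbf a$ with $G_{\mathbf a}\in\Delta(K)$ and every $i$, it gives
$$
H^i_{\mathfrak m_R}(J^r/J^q)_{(\mathbf a,\mathbf 0)}\cong
\widetilde H^{i-|G_{\mathbf a}|-1}\bigl(\Delta_{\mathbf a}(K^q),\Delta_{\mathbf a}(K^r);k\bigr)\cong
H^i_{\mathfrak m_A}(K^r/K^q)_{\mathbf a}.
$$
For $\mathbf a$ with $G_{\mathbf a}\notin\Delta(K)$, the module $K^r/K^q$ becomes zero after inverting $x_{G_{\mathbf a}}$, because its support is $V(K)$ by Lemma~\ref{lem:support}. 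The $\mathbf a$-graded piece of local cohomology is computed from the Čech complex in degrees with these variables inverted, so it vanishes. This is the standard vanishing that accompanies the Hochster--Takayama formula; I would either cite it from \cite{HaMinh} or justify it by this Čech argument. Consequently, for every $i$ and every $\mathbf a\in\mathbb Z^n$,
$$
H^i_{\mathfrak m_A}(K^r/K^q)_{\mathbf a}
$$
is either zero or isomorphic to a graded piece of $H^i_{\mathfrak m_R}(J^r/J^q)$.

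Finally, I compare dimensions. By Lemma~\ref{lem:support}, applied in $A$ and in $R$ (note $J\ne R$ and $J\neq0$),
$$
\dim(K^r/K^q)=\dim A/K=\dim R/J=\dim(J^r/J^q),
$$
where the middle equality holds because $R/J\cong A/K$. If $J^r/J^q$ is Cohen--Macaulay, then $H^i_{\mathfrak m_R}(J^r/J^q)=0$ for $i<\dim A/K$. By the previous step, every graded piece of $H^i_{\mathfrak m_A}(K^r/K^q)$ then vanishes for such $i$. Hence $K^r/K^q$ is Cohen--Macaulay.

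The main obstacle I expect is the bookkeeping in the degree-complex identification and the vanishing when $G_{\mathbf a}\notin\Delta(K)$. In particular, I must check that restricting to $y$-degree $\mathbf 0$ really commutes with inverting $x$-variables. It does, since those localizations preserve the $y$-grading.
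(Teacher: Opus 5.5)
Your proof is correct, but it takes a different route from the paper.

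The paper's argument is purely algebraic. Since $y_i^{q-r}J^r\subseteq J^q$, the module $M=J^r/J^q$ is finite over $A$, and the independence theorem identifies $H^i_{\mathfrak r}(M)$ with $H^i_{\mathfrak n}(M)$ computed over $A$. Then $K^r/K^q=M_{\mathbf 0}$ is an $A$-direct summand of $M$, so its local cohomology below $d$ vanishes.

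You instead use the monomial structure to show $\Delta_{(\mathbf a,\mathbf 0)}(J^t)=\Delta_{\mathbf a}(K^t)$ and $\Delta(J)=\Delta(K)$. The relative Hochster--Takayama formula then identifies $H^i_{\mathfrak m_R}(J^r/J^q)_{(\mathbf a,\mathbf 0)}$ with $H^i_{\mathfrak m_A}(K^r/K^q)_{\mathbf a}$ whenever $G_{\mathbf a}\in\Delta(K)$. Your \v{C}ech vanishing for the remaining degrees is also sound. Because $K^r/K^q$ is $\mathbb N^n$-graded, only localizations at $x_F$ with $F\supseteq G_{\mathbf a}$ have a nonzero degree-$\mathbf a$ part, and these vanish since $x_{G_{\mathbf a}}\in K$.

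What each approach buys: your version stays inside the paper's combinatorial toolkit and gives an explicit degree-by-degree isomorphism. However, it is tied to monomial ideals and to the formula from \cite{HaMinh}. The paper's argument does not use the monomial structure of $K$ in any essential way.

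The detour through degree complexes is not actually needed. Each $y_j$ acts nilpotently on $M$, so every \v{C}ech term obtained by inverting some $y_j$ is zero. Hence the degree-$(\mathbf a,\mathbf 0)$ strand of the \v{C}ech complex of $M$ over $R$ is literally the degree-$\mathbf a$ strand of the \v{C}ech complex of $K^r/K^q$ over $A$. This gives your isomorphism for every $\mathbf a\in\mathbb Z^n$ at once, with no case split; it is essentially the paper's argument written at the chain level.
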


\begin{proof}
Set $M=J^r/J^q$, let $\mathfrak n$ be the homogeneous maximal ideal of $A$,
and set $\mathfrak r=\mathfrak nR+(y_1,\ldots,y_s)$. Since
$y_i^{q-r}J^r\subseteq J^q$, each $y_i$ acts nilpotently on $M$. Hence, $M$
is finitely generated as an $A$-module and
$$
\sqrt{\mathfrak nR+\operatorname{Ann}_R(M)}
=
\sqrt{\mathfrak r+\operatorname{Ann}_R(M)}.
$$
The independence theorem gives
$H_{\mathfrak nR}^i(M)\cong H_{\mathfrak r}^i(M)$ for every $i$.
Since $M$ is finite over $A$, one also has
$H_{\mathfrak n}^i(M)\cong H_{\mathfrak nR}^i(M)$. By
Lemma~\ref{lem:support}, applied in $R$,
$$
\dim_RM=\dim(R/J)=\dim(A/K)=:d.
$$
Since $M$ is Cohen--Macaulay, these local cohomology modules vanish for
$i<d$.

Give $R$ the $\mathbb N^s$-grading induced by the variables
$y_1,\ldots,y_s$. For every $t\ge1$, specialization at
$y_1=\cdots=y_s=0$ gives $J^t\cap A=K^t$. Hence, the component of $M$ of
$y$-degree zero is
$$
M_{\mathbf0}=K^r/K^q.
$$
Projection onto $M_{\mathbf0}$ is $A$-linear, so $K^r/K^q$ is an
$A$-module direct summand of $M$. Therefore
$H_{\mathfrak n}^i(K^r/K^q)$ vanishes for every $i<d$. By
Lemma~\ref{lem:support}, $\dim_A(K^r/K^q)=d$, and hence $K^r/K^q$ is
Cohen--Macaulay.
\end{proof}

\begin{lemma}\label{lem:localization}
Assume that $I$ has no minimal generator of degree one. Fix $i\in[n]$ and
set
$$
V_i=\{j\in[n]\setminus\{i\}:\{i,j\}\in\Delta\},
\text{ }
W_i=[n]\setminus(V_i\cup\{i\}).
$$
Let $A_i=k[x_j:j\in V_i]$ and
$K_i=I_{\operatorname{link}_\Delta(i)}\subseteq A_i$. Then
$$
IS_{x_i}=\bigl(K_i+(x_j:j\in W_i)\bigr)S_{x_i}.
$$
Fix $1\le r<q$. If $I^r/I^q$ is Cohen--Macaulay, then either $K_i=0$, or
$K_i^r/K_i^q$ is Cohen--Macaulay.
\end{lemma}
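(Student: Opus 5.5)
The plan has two parts. First we compute $IS_{x_i}$ generator by generator. Then we pass from $S$ to the localization and apply Lemma~\ref{lem:linear-generators}.

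\textbf{The equality $IS_{x_i}=(K_i+(x_j:j\in W_i))S_{x_i}$.} Since $I=I_\Delta$, the ideal $IS_{x_i}$ is generated by the monomials $x_{N\setminus\{i\}}$ with $N$ a minimal nonface of $\Delta$. We sort these generators as follows.
\begin{enumerate}
\item If $j\in W_i$, then $\{i,j\}\notin\Delta$. Because there are no degree-one generators, $\{i,j\}$ is itself a minimal nonface, so $x_j\in IS_{x_i}$.
\item If $N\setminus\{i\}$ meets $W_i$, the generator $x_{N\setminus\{i\}}$ is a multiple of some such $x_j$ and is therefore redundant.
\item Otherwise $N\setminus\{i\}\subseteq V_i$.
\end{enumerate}
In the last case we claim $x_{N\setminus\{i\}}\in K_i$, i.e.\ $N\setminus\{i\}\notin\operatorname{link}_\Delta(i)$. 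This holds because $(N\setminus\{i\})\cup\{i\}\supseteq N$ is a nonface. Conversely, a minimal nonface $G$ of $\operatorname{link}_\Delta(i)$ with $G\subseteq V_i$ satisfies $G\cup\{i\}\notin\Delta$. Hence $x_Gx_i\in I$, so $x_G\in IS_{x_i}$. This gives both inclusions. We also observe that $K_i\subseteq A_i$ is squarefree.

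\textbf{Transfer of Cohen--Macaulayness.} Next we transfer the property to the localization. Set $P=(x_j:j\ne i)$, a prime not containing $x_i$. Cohen--Macaulayness localizes, so $(I^r/I^q)_P=I_P^r/I_P^q$ is Cohen--Macaulay over $S_P$. By the computation above, $I_P=(K_i+(x_j:j\in W_i))S_P$.

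The main obstacle is that Lemma~\ref{lem:linear-generators} is a statement about graded modules over a polynomial ring, not over $S_P$. To handle this, write $R=A_i[x_j:j\in W_i]$ and $J=(x_j:j\in W_i)+K_i\subseteq R$. Then $S=R[x_i]$, and $S_P$ is the localization of $R[x_i]$ at $\mathfrak r R[x_i]$, where $\mathfrak r$ is the homogeneous maximal ideal of $R$. The module in question is
$$
(J^r/J^q)\otimes_R R[x_i]_{\mathfrak rR[x_i]}.
$$
The map $R_{\mathfrak r}\to R[x_i]_{\mathfrak rR[x_i]}$ is flat local with closed fiber a field. Hence depth and dimension are preserved, and $(J^r/J^q)_{\mathfrak r}$ is Cohen--Macaulay. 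For a graded module, Cohen--Macaulayness at the homogeneous maximal ideal is equivalent to Cohen--Macaulayness. So $J^r/J^q$ is Cohen--Macaulay over $R$.

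\textbf{Conclusion.} If $K_i=0$ we are done. Otherwise, we must check $K_i\ne A_i$. A unit ideal would mean $\emptyset\notin\operatorname{link}_\Delta(i)$, i.e.\ $\{i\}\notin\Delta$, which is impossible since $I$ has no linear generators. With $0\ne K_i\subsetneq A_i$, Lemma~\ref{lem:linear-generators} yields that $K_i^r/K_i^q$ is Cohen--Macaulay.

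If $W_i=\emptyset$, the argument is the same with $J=K_i$, and the last step is immediate without invoking the lemma.
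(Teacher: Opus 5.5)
Your proof is correct and follows the paper's argument step by step. You verify $IS_{x_i}=(K_i+(x_j:j\in W_i))S_{x_i}$ generator by generator, localize at $(x_j:j\ne i)$ using the flat local map with closed fiber $k(x_i)$, pass to graded Cohen--Macaulayness of $J^r/J^q$, and then apply Lemma~\ref{lem:linear-generators}; your explicit check that $K_i\neq A_i$, which is the hypothesis $K\subsetneq A$ of that lemma, is a small point the paper leaves implicit.
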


\begin{proof}
Set $R_i=k[x_j:j\neq i]$ and
$J_i=K_i+(x_j:j\in W_i)\subseteq R_i$. We first prove
$IS_{x_i}=J_iS_{x_i}$. If $j\in W_i$, then $\{i,j\}\notin\Delta$, so
$x_ix_j\in I$ and $x_j\in IS_{x_i}$. If $F\subseteq V_i$ is a nonface of
$\operatorname{link}_\Delta(i)$, then $F\cup\{i\}\notin\Delta$, so
$x_ix_F\in I$ and $x_F\in IS_{x_i}$. Hence,
$J_iS_{x_i}\subseteq IS_{x_i}$.

Conversely, let $x_F$ be a squarefree monomial generator of $I$. If
$F\cap W_i\neq\emptyset$, then $x_F\in J_iS_{x_i}$. Suppose
$F\subseteq V_i\cup\{i\}$. If $i\in F$, then $F\setminus\{i\}$ is a
nonface of the link, so $x_F\in K_iS_{x_i}$. If $i\notin F$, then
$F\notin\Delta$ and therefore $F\cup\{i\}\notin\Delta$; hence $F$ is
a nonface of the link and again $x_F\in K_iS_{x_i}$. This proves the
asserted equality.

Consequently,
$$
(I^r/I^q)_{x_i}
\cong
(J_i^r/J_i^q)\otimes_{R_i}S_{x_i}.
$$
Assume that $I^r/I^q$ is Cohen--Macaulay. If $J_i=0$, then $K_i=0$. Suppose
$J_i\neq0$. Let $\mathfrak n_i=(x_j:j\neq i)\subseteq R_i$ and
$Q_i=\mathfrak n_iS_{x_i}$. The map
$(R_i)_{\mathfrak n_i}\to(S_{x_i})_{Q_i}$ is flat and local with closed
fiber $k(x_i)$. Cohen--Macaulayness localizes, so the corresponding localization of
$(I^r/I^q)_{x_i}$ is Cohen--Macaulay. Since this module is the base change of
$(J_i^r/J_i^q)_{\mathfrak n_i}$, and the closed fiber has dimension and
depth zero, the depth and dimension formulas show that
$(J_i^r/J_i^q)_{\mathfrak n_i}$ is Cohen--Macaulay. Since
$J_i^r/J_i^q$ is a finitely generated graded $R_i$-module, it is
Cohen--Macaulay. If $K_i\neq0$, Lemma~\ref{lem:linear-generators} applied to
$J_i=(x_j:j\in W_i)+K_i$ gives that $K_i^r/K_i^q$ is Cohen--Macaulay.
\end{proof}

We can now combine localization, the three-vertex obstruction, and the
structure of locally complete-intersection complexes.

\begin{theorem}\label{thm:ordinary}
Let $I\subsetneq S$ be a nonzero squarefree monomial ideal and let
$1\le r<q$ with $q\ge3$. Then
$$
I^r/I^q\text{ is Cohen--Macaulay}
\Longleftrightarrow
I\text{ is a complete intersection}.
$$
Consequently, if $I^r/I^q$ is Cohen--Macaulay for one such pair $(r,q)$,
then $I^a/I^b$ is Cohen--Macaulay for every $1\le a<b$.
\end{theorem}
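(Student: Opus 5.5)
The direction ``$\Leftarrow$'' is identical to $(2)\Rightarrow(1)$ in Theorem~\ref{CM-edge-ideals}, and so is the final assertion. If $I$ is a complete intersection, then $\operatorname{gr}_I(S)$ is a polynomial ring over $S/I$. Hence every $I^t/I^{t+1}$ is free over $S/I$, so every $S/I^t$ is Cohen--Macaulay of dimension $\dim S/I$. The exact sequence $0\to I^r/I^q\to S/I^q\to S/I^r\to0$, the Depth Lemma and Lemma~\ref{lem:support} then give Cohen--Macaulayness of $I^r/I^q$, and likewise of every $I^a/I^b$.

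For ``$\Rightarrow$'' I would argue by induction on $n$. First I remove linear generators. Write $I=(y_1,\dots,y_s)+K$, where $K$ is generated in the remaining variables and has no generator of degree one. If $K=0$, then $I$ is generated by variables and is a complete intersection. Otherwise Lemma~\ref{lem:linear-generators} shows that $K^r/K^q$ is Cohen--Macaulay, and $I$ is a complete intersection if and only if $K$ is. So I may assume $I$ has no linear generators, i.e.\ every vertex is a face of $\Delta$.

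For each $i$, Lemma~\ref{lem:localization} gives that $K_i=I_{\operatorname{link}_\Delta(i)}$ is either $0$ or has $K_i^r/K_i^q$ Cohen--Macaulay over a ring with fewer variables. By induction each $K_i$ is a complete intersection, possibly zero. Through the equality $IS_{x_i}=(K_i+(x_j:j\in W_i))S_{x_i}$, this says that $I$ is locally a complete intersection on the punctured spectrum, at least at the monomial primes not containing $x_i$, for every $i$. Hence $\Delta$ is a locally complete intersection complex in the sense of Terai--Yoshida.

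Next, $I^r/I^q$ is Cohen--Macaulay, so by the argument of Proposition~\ref{prop:second-symbolic} its associated primes are exactly $\operatorname{Min}(I)$. Localizing, every link is then unmixed as well, and $\Delta$ is pure. I would also apply Lemma~\ref{lem:three-vertex-obstruction}: $\Delta$ contains no vertex $1$ and edge $\{2,3\}$ with $\{1,2\},\{1,3\}\notin\Delta$. In particular the $1$-skeleton of $\Delta$ cannot have an induced path $2-1-3$ whose complement has $\{2,3\}$ as a face in $\Delta$.

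Terai--Yoshida's structure theorem says that a connected locally complete intersection complex is, up to cones, either a complete intersection complex, an $m$-gon with $m\ge3$ (where $m=3$ means the boundary of a triangle), or of dimension at most one of a restricted type. The plan is to rule out every non-complete-intersection case with Lemma~\ref{lem:three-vertex-obstruction}. For an $m$-gon with $m\ge4$, three consecutive vertices give a vertex $1$ together with a non-adjacent edge; one checks that the edge $\{2,3\}$ sits opposite vertex $1$ and that $\{1,2\},\{1,3\}$ are non-edges. For the disconnected case (the join of several components, or a disjoint union), a vertex in one component and an edge in another already gives the obstruction. Triangles and low-dimensional cases need a direct check, using the unmixedness argument of Theorem~\ref{CM-edge-ideals} to exclude $K_\ell$-type configurations with $\ell\ge3$.

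The main obstacle is this last step: matching the exact list in the Terai--Yoshida classification, including cones and joins with simplices, against the obstruction of Lemma~\ref{lem:three-vertex-obstruction} and the associated-prime argument. In some cases I expect to need an explicit associated-prime computation in the style of Theorem~\ref{CM-edge-ideals} rather than the three-vertex lemma.
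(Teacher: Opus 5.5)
Your overall route is the paper's. You prove $\Leftarrow$ via $\operatorname{gr}_I(S)$, strip linear generators with Lemma~\ref{lem:linear-generators}, and localize at vertices with Lemma~\ref{lem:localization} plus induction to make $\Delta$ locally a complete-intersection complex. You then combine Terai--Yoshida with Lemma~\ref{lem:three-vertex-obstruction} and Theorem~\ref{CM-edge-ideals}.

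The gap is the final step, which you leave as an unexecuted plan resting on a misremembered classification. The result actually needed is this: a \emph{connected} locally complete-intersection complex of dimension at least $2$ is a complete-intersection complex. There are no exceptional cones. The link of the apex of a cone is its base, so a cone is locally complete intersection only if its base is a complete-intersection complex. Hence in dimension $\ge2$ the only thing to check is connectedness. Your ``vertex in one component, edge in another'' use of the obstruction gives this whenever $\dim\Delta\ge1$.

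Some of your case checks are also wrong. Three consecutive vertices of a polygon never produce the configuration of Lemma~\ref{lem:three-vertex-obstruction}, because consecutive vertices are adjacent. In the $4$-gon no vertex--edge pair works at all, since every edge avoiding a vertex $v$ contains a neighbor of $v$. This is harmless only because $I_{C_4}=(x_1x_3,x_2x_4)$ is a complete intersection, so the $4$-gon should not be on your list of cases to exclude.

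To finish, split by dimension as the paper does.
\begin{itemize}
\item If $\dim\Delta=1$, each vertex link is a $0$-dimensional complete-intersection complex, hence has at most two vertices. So the connected graph $\Delta$ is a path or a cycle; label its vertices consecutively $v_1,v_2,\ldots$. For a cycle on $m\ge5$ vertices, or a path on at least $4$ vertices, the vertex $v_1$ and the edge $\{v_3,v_4\}$ give the obstruction. The remaining cases $C_3$ (principal ideal), $C_4$, and the path on three vertices are complete intersections. The paper instead notes that every such $\Delta\neq C_3$ is triangle-free, so $I$ is an edge ideal and Theorem~\ref{CM-edge-ideals} applies; either way works.
\item If $\dim\Delta=0$, then $I$ is the edge ideal of a complete graph, and Theorem~\ref{CM-edge-ideals} forces at most two vertices.
\end{itemize}
The purity and unmixedness of links that you derive are true but play no role.
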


\begin{proof}
Suppose first that $I$ is a complete intersection. As recalled in the proof of
Theorem~\ref{CM-edge-ideals}, the associated graded ring
$\operatorname{gr}_I(S)\cong\operatorname{Sym}_{S/I}(I/I^2)$, and hence
$S/I^t$ is Cohen--Macaulay of dimension $\dim(S/I)$ for every $t\ge1$. From
$$
0\longrightarrow I^r/I^q\longrightarrow S/I^q\longrightarrow S/I^r
\longrightarrow0
$$
and Lemma~\ref{lem:support}, the Depth Lemma gives that $I^r/I^q$ is
Cohen--Macaulay.

Conversely, suppose that $I^r/I^q$ is Cohen--Macaulay. We proceed by
induction on $n$, assuming the assertion for all squarefree monomial ideals in
fewer than $n$ variables. The assertion is immediate for $n\le2$. First
suppose that $I$ has no linear minimal generator. For each $i\in[n]$, let
$K_i=I_{\operatorname{link}_\Delta(i)}$ as in Lemma~\ref{lem:localization}.
That lemma shows that either $K_i=0$, or $K_i^r/K_i^q$ is Cohen--Macaulay.
Since the link has fewer vertices, the induction hypothesis implies that every
$K_i$ is a complete intersection. Thus, $\Delta$ is locally a
complete-intersection complex in the sense of Terai and Yoshida.

We claim that $\Delta$ is connected whenever $\dim\Delta\ge1$. Otherwise,
choose an edge $\{2,3\}$ in one connected component and a vertex $1$ in
another. Then
$$
\{1\},\{2,3\}\in\Delta
\text{ and }
\{1,2\},\{1,3\}\notin\Delta,
$$
contradicting Lemma~\ref{lem:three-vertex-obstruction}. Hence, $\Delta$ is
connected.

If $\dim\Delta\ge2$, then \cite[Theorem~1.5]{TeraiYoshida} implies that
a connected locally complete-intersection complex is a complete-intersection
complex. Hence, $I$ is a complete intersection.

Suppose $\dim\Delta=1$. The link of each vertex is a zero-dimensional
complete-intersection complex. A zero-dimensional complex on $t$ vertices has
Stanley--Reisner ideal $(x_ax_b:1\le a<b\le t)$, which is a complete
intersection only when $t\le2$. Hence, every vertex of the graph $\Delta$ has
degree at most two. Since $\Delta$ is connected, it is a path or a cycle. If
$\Delta$ is the $3$-cycle, then $I$ is principal. Otherwise $\Delta$ is
triangle-free, every minimal nonface has cardinality two, and $I$ is the edge
ideal of the complement of the one-skeleton of $\Delta$. Theorem~\ref{CM-edge-ideals}
then implies that $I$ is a complete intersection.

If $\dim\Delta=0$, then $I$ is the edge ideal of the complete graph on the
vertices of $\Delta$, and Theorem~\ref{CM-edge-ideals} again implies that
$I$ is a complete intersection.

This proves the result when $I$ has no linear minimal generator. In general,
let $y_1,\ldots,y_s$ be the variables belonging to $I$, let $A$ be the
polynomial ring on the remaining variables, and write
$I=(y_1,\ldots,y_s)+K$, where $K\subseteq A$ has no linear minimal
generator. If $K=0$, then $I$ is generated by variables. If $K\neq0$,
Lemma~\ref{lem:linear-generators} gives that $K^r/K^q$ is Cohen--Macaulay,
so the case already proved shows that $K$ is a complete intersection. Since
the variables $y_1,\ldots,y_s$ do not occur in the minimal generators of
$K$, the ideal $I$ is a complete intersection.

The final assertion follows from the forward implication for complete
intersections, which applies to every pair $1\le a<b$.
\end{proof}

The upper exponent $q=2$ is genuinely exceptional, even in arbitrarily high
dimension. The following family makes this explicit.

\begin{example}\label{ex:stellar-family}
Fix $d\ge2$ and let
$$
R=k[x_1,\ldots,x_d,y_1,\ldots,y_d,v].
$$
Let $\Gamma$ be the complete-intersection complex on
$\{x_1,\ldots,x_d,y_1,\ldots,y_d\}$ with Stanley--Reisner ideal
$$
I_\Gamma=(x_1y_1,\ldots,x_dy_d).
$$
Thus, $\Gamma$ is the boundary complex of the $d$-dimensional cross-polytope
and, in particular, is a non-acyclic complete-intersection complex. Let
$\Delta$ be the stellar subdivision of $\Gamma$ at the facet
$\{x_1,\ldots,x_d\}$, with new vertex $v$. Its minimal nonfaces give
$$
I_\Delta=(x_1y_1,\ldots,x_dy_d,vy_1,\ldots,vy_d,x_1x_2\cdots x_d).
$$
By \cite[Theorem~5.4]{RinaldoTeraiYoshida}, $R/I_\Delta^2$ is
Cohen--Macaulay. Since stellar subdivision of a simplicial sphere is again a
simplicial sphere, $R/I_\Delta$ is Cohen--Macaulay. The exact sequence
$$
0\longrightarrow I_\Delta/I_\Delta^2
\longrightarrow R/I_\Delta^2
\longrightarrow R/I_\Delta\longrightarrow0
$$
together with the Depth Lemma and Lemma~\ref{lem:support} shows that
$I_\Delta/I_\Delta^2$ is Cohen--Macaulay, and
Proposition~\ref{prop:second-symbolic} gives
$I_\Delta^2=I_\Delta^{(2)}$.

For $d\ge2$, the minimal generators $x_1y_1$ and $vy_1$ have intersecting
supports, so $I_\Delta$ is not a complete intersection. Hence,
Theorem~\ref{thm:ordinary} gives
$$
I_\Delta^r/I_\Delta^q\text{ is not Cohen--Macaulay whenever }
1\le r<q\text{ and }q\ge3.
$$
Thus, the exceptional behavior at the second conormal level is not confined to
graphs or low dimension. When $d=2$, this construction recovers the pentagon.
\end{example}

\section{Symbolic higher conormal filtration}\label{sec:symbolic}

We now turn to symbolic windows. For a squarefree monomial ideal
$I=I_\Delta$, recall that
$$
I^{(q)}=\bigcap_{F\in\mathcal F(\Delta)}P_F^q,
\text{ }
P_F=(x_i:i\notin F).
$$
The following description of the corresponding degree complexes is known;
see \cite{MinhTrung,Takayama}. We record it for later use.

\begin{lemma}\label{lem:symbolic-degree}
Let $\mathbf c\in\mathbb N^n$ and $q\ge1$. Then
$$
\Delta_{\mathbf c}(I^{(q)})
=
\left\langle
F\in\mathcal F(\Delta):
\sum_{i\notin F}c_i\le q-1
\right\rangle.
$$
\end{lemma}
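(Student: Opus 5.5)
The plan is to unwind the definition of the degree complex, using the primary decomposition $I^{(q)}=\bigcap_{F\in\mathcal F(\Delta)}P_F^q$. Since $\mathbf c\in\mathbb N^n$, we have $G_{\mathbf c}=\emptyset$. So a set $H\subseteq[n]$ lies in $\Delta_{\mathbf c}(I^{(q)})$ if and only if $x^{\mathbf c}\notin I^{(q)}S_{x_H}$.

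First we check that localization commutes with the finite intersection, so that
$$
I^{(q)}S_{x_H}=\bigcap_{F\in\mathcal F(\Delta)}P_F^qS_{x_H}.
$$
This holds because localization is flat. Next we compute each term. If $H\not\subseteq F$, some variable $x_i$ with $i\in H\setminus F$ lies in $P_F$ and becomes a unit, so $P_F^qS_{x_H}=S_{x_H}$. If $H\subseteq F$, no generator of $P_F$ is inverted. Then $P_F^qS_{x_H}$ is the extension of a monomial ideal in variables disjoint from $H$, and its contraction to $S$ is $P_F^q$ itself. Hence
$$
I^{(q)}S_{x_H}=\Bigl(\bigcap_{F\in\mathcal F(\Delta),\,F\supseteq H}P_F^q\Bigr)S_{x_H}.
$$
If no facet contains $H$, the intersection is empty and equals $S_{x_H}$; this matches $H\notin\Delta$ lying outside both sides.

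Then we test membership. For monomial ideals $J$ whose generators avoid the variables in $H$, a monomial lies in $JS_{x_H}$ exactly when it lies in $J$, since multiplying by powers of the $x_i$ with $i\in H$ does not help. Thus $x^{\mathbf c}\notin I^{(q)}S_{x_H}$ if and only if $x^{\mathbf c}\notin P_F^q$ for some facet $F\supseteq H$. A monomial $x^{\mathbf c}$ lies in $P_F^q$ if and only if its degree in the variables of $P_F$ is at least $q$, that is, $\sum_{i\notin F}c_i\ge q$. So $H\in\Delta_{\mathbf c}(I^{(q)})$ if and only if $H$ is contained in some facet $F$ with $\sum_{i\notin F}c_i\le q-1$. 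This is precisely membership in the complex generated by those facets, which proves the formula.

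The only mildly delicate point is the interchange of localization with the intersection, together with the claim that membership of $x^{\mathbf c}$ in the localized intersection can be tested componentwise. Both are handled by flatness and by the fact that all ideals involved are monomial. I do not expect a real obstacle.
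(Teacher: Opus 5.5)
Your proof is correct and complete. The paper gives no argument for this lemma; it records the formula as known and cites Minh--Trung and Takayama. Your route is the direct one that underlies those references:
\begin{itemize}
\item since $\mathbf c\in\mathbb N^n$, you have $G_{\mathbf c}=\emptyset$, so you can work with the paper's localization definition of $\Delta_{\mathbf c}(I^{(q)})$;
\item you commute localization with the finite intersection $I^{(q)}=\bigcap_{F\in\mathcal F(\Delta)}P_F^q$;
\item the components with $H\not\subseteq F$ become the unit ideal;
\item you test $x^{\mathbf c}$ against each remaining $P_F^q$ via the condition $\sum_{i\notin F}c_i\ge q$.
\end{itemize}
Compared with the citation, this makes the lemma self-contained. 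It also shows that only the localization form of the degree complex is needed, not Takayama's original description, whose agreement with it the paper merely asserts.

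Two small remarks. First, the step you flag as delicate is not. Flatness is needed only to commute localization with the finite intersection. After that, membership in the intersection is membership in each component by definition, and each component is tested by your monomial divisibility argument. Second, your treatment of $H=\emptyset$ covers the degenerate case where no facet satisfies the inequality: then $x^{\mathbf c}\in I^{(q)}$, and both sides are the void complex.
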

To detect failure of the matroid condition, we use the following standard
three-element witness; see \cite[Theorem~39.1]{Schrijver}.

\begin{lemma}\label{lem:matroid-witness}
The simplicial complex $\Delta$ is not a matroid if and only if there exist
faces $F,G\in\Delta$ such that, after relabeling,
$$
F\setminus G=\{1\},
\text{ }
G\setminus F=\{2,3\},
$$
and $F\cup\{2\},F\cup\{3\}\notin\Delta$.
\end{lemma}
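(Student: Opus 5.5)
The plan is to use the independence-axiom definition of a matroid. A nonvoid complex $\Delta$ is a matroid when the exchange axiom holds: for all faces $F,G\in\Delta$ with $|G|>|F|$ there is some $x\in G\setminus F$ with $F\cup\{x\}\in\Delta$. Loops are allowed here, so no condition on singletons is imposed. The direction ``$\Leftarrow$'' is immediate. Given such a witness, $|G|=|F\cap G|+2=|F|+1$, and $G\setminus F=\{2,3\}$, yet neither $F\cup\{2\}$ nor $F\cup\{3\}$ lies in $\Delta$. So the exchange axiom fails for the pair $(F,G)$, and $\Delta$ is not a matroid.

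For ``$\Rightarrow$'' I first normalize a failing pair. Suppose $(F,G)$ violates the exchange axiom with $|G|>|F|$. Choose $G'\subseteq G$ with $F\cap G\subseteq G'$ and $|G'|=|F|+1$; this is possible since $|F\cap G|\le|F|<|G|$. Then $G'\in\Delta$ and $G'\setminus F\subseteq G\setminus F$, so $(F,G')$ still violates the axiom. Hence the set of violating pairs with $|G|=|F|+1$ is nonempty. Among these I fix one minimizing $k=|F\setminus G|$, so that $|G\setminus F|=k+1$. If $k=0$, then $F\subsetneq G$ and $G=F\cup\{x\}$ for the unique $x\in G\setminus F$, which is not a violation; so $k\ge1$. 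If $k=1$, then $F\setminus G$ is a single vertex and $G\setminus F$ has two vertices, and after relabeling this is exactly the required witness.

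The main step is to rule out $k\ge2$ by producing a witness directly. Pick $a\in F\setminus G$ and set $F'=F\setminus\{a\}$, so $|G|=|F'|+2$.

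First I claim that some $b\in G\setminus F'=G\setminus F$ satisfies $F'\cup\{b\}\in\Delta$. If not, shrink $G$ as above to $G''\supseteq F'\cap G$ of size $|F'|+1$. This gives a violating pair $(F',G'')$ with $|F'\setminus G''|=|F'\setminus G|=k-1<k$, contradicting minimality.

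Set $H=F'\cup\{b\}$. Then $|H|=|F|$ and $|H\setminus G|=k-1$. By minimality again, the pair $(H,G)$ satisfies the exchange axiom, giving $c\in G\setminus H=(G\setminus F)\setminus\{b\}$ with $H\cup\{c\}\in\Delta$.

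Now compare $F$ with $G^\ast=H\cup\{c\}=(F\setminus\{a\})\cup\{b,c\}$. Here $F\setminus G^\ast=\{a\}$ and $G^\ast\setminus F=\{b,c\}$ with $b\neq c$. Since $b,c\in G\setminus F$ and $(F,G)$ violates the exchange axiom, neither $F\cup\{b\}$ nor $F\cup\{c\}$ is a face. Relabeling $a,b,c$ as $1,2,3$ gives the witness. (This in fact shows the minimal $k$ is $1$, so the case split is only for clarity.)

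The only delicate point is bookkeeping: checking that each shrinking step keeps $F\cap G$, so that the differences $F\setminus G$ are computed correctly and the minimality hypothesis applies. This is the standard argument, cf.\ \cite[Theorem~39.1]{Schrijver}.
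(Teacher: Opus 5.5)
Your proof is correct: normalizing a failing pair to $|G|=|F|+1$, minimizing $|F\setminus G|$, and building $G^\ast=(F\setminus\{a\})\cup\{b,c\}$ from two applications of minimality is the standard argument behind \cite[Theorem~39.1]{Schrijver}, which the paper cites instead of giving a proof. The one implicit assumption, that $\Delta$ is nonvoid (otherwise ``not a matroid'' would hold with no witness), is guaranteed in the paper's setting by $I\subsetneq S$.
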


The degree-complex formula and this witness turn a failure of matroid exchange
into the same relative two-star pattern as in the ordinary theory.

\begin{theorem}\label{thm:symbolic}
Let $I=I_\Delta\subsetneq S$ be a nonzero squarefree monomial ideal and let
$1\le r<q$ with $q\ge3$. Then
$$
I^{(r)}/I^{(q)}\text{ is Cohen--Macaulay}
\Longleftrightarrow
\Delta\text{ is a matroid}.
$$
Here matroids are allowed to have loops. Consequently, if one symbolic window
$I^{(r)}/I^{(q)}$ with $q\ge3$ is Cohen--Macaulay, then
$I^{(a)}/I^{(b)}$ is Cohen--Macaulay for every $1\le a<b$.
\end{theorem}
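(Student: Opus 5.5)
For the forward implication (matroid $\Rightarrow$ Cohen--Macaulay), the plan is to invoke the Terai--Trung / Minh--Trung / Varbaro characterization: $\Delta$ is a matroid (loops allowed) if and only if $S/I^{(t)}$ is Cohen--Macaulay for every $t\ge1$. Loops only contribute variables lying in $I$. These are harmless, because $I^{(t)}$ then splits as a sum of the loop-variable part and the loopless part in disjoint variables; alternatively, the cited characterization can be applied directly with loops. Assuming $\Delta$ is a matroid, $S/I^{(q)}$ and $S/I^{(r)}$ are both Cohen--Macaulay of dimension $\dim(S/I)$. Lemma~\ref{lem:support} gives $\dim(I^{(r)}/I^{(q)})=\dim(S/I)=:d$. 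The Depth Lemma applied to
$$0\to I^{(r)}/I^{(q)}\to S/I^{(q)}\to S/I^{(r)}\to0$$
gives depth at least $\min(d,d+1)=d$. Hence every window $I^{(a)}/I^{(b)}$ is Cohen--Macaulay, which also yields the ``consequently'' clause once the converse is in hand.

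For the converse, I would argue by contraposition. Suppose $\Delta$ is not a matroid, and take the witness $F,G$ of Lemma~\ref{lem:matroid-witness}. Set $H=F\cap G$, so $F=H\cup\{1\}$ and $G=H\cup\{2,3\}$. I would imitate Lemma~\ref{lem:three-vertex-obstruction} with $\mathbf b=(q-1)\mathbf e_1+\mathbf e_2+\mathbf e_3$, but pass to the link of $H$ through negative coordinates. Concretely, let $L=\operatorname{link}_\Delta(F)\cap\operatorname{link}_\Delta(G)$, choose a maximal face $U\supseteq H$ of the set of faces $W\supseteq H$ with $W\cup\{1\},W\cup\{2,3\}\in\Delta$ (for instance $U=H\cup U'$ with $U'$ maximal in $L$), and put $\mathbf a=\mathbf b-\mathbf e_U$. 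Note that $1,2,3\notin U$, since $U\cup\{1,2\}$ and $U\cup\{1,3\}$ are nonfaces. Lemma~\ref{lem:negative-link} reduces the computation to $\Delta_{\mathbf b}(I^{(q)})$ and $\Delta_{\mathbf b}(I^{(r)})$, and Lemma~\ref{lem:symbolic-degree} computes these explicitly.

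For a facet $E$, the relevant quantity is $\sum_{i\notin E}b_i$. This equals $0$ if $\{1,2,3\}\subseteq E$, which is impossible in the star of $U$. It equals $1$ if $E\supseteq\{1,2\}$ or $\{1,3\}$ but not both. It equals $2$ if $E\supseteq\{2,3\}$ and $1\notin E$. It is at least $q-1$ if $1\in E$ and $2,3\notin E$, with equality exactly when $E$ misses precisely $\{2,3\}$. It is at least $q$ if $1\notin E$ and $E$ misses $2$ or $3$.

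Restricting to facets containing $U$, facets containing $U\cup\{1,2\}$ do not occur, by maximality and because $F\cup\{2\}\notin\Delta$ after enlarging appropriately. This is the step that needs care, since the witness only forbids $F\cup\{2\}$, not $U\cup\{1,2\}$; I would choose $U$ maximal among faces $W\supseteq H$ with $W\cup\{1\},W\cup\{2,3\}\in\Delta$ and $W\cup\{1,2\},W\cup\{1,3\}\notin\Delta$, and check this condition persists. With that choice, $\Delta_{\mathbf a}(I^{(q)})$ is the union of $A=\operatorname{link}_{\operatorname{star}(1)}(U)$ and $B=\operatorname{link}_{\operatorname{star}(\{2,3\})}(U)$ (using $q-1\ge2$ and $2\le q-1$), while $\Delta_{\mathbf a}(I^{(r)})$ uses only facets with sum at most $r-1<q-1$, hence lies in $B$. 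As in Lemma~\ref{lem:three-vertex-obstruction}, maximality gives $A\cap B=\{\emptyset\}$. Lemma~\ref{lem:relative-zero}, with the roles of $A$ and $B$ swapped, gives nonzero $\widetilde H_0$. The relative Hochster--Takayama formula then yields $H^{|U|+1}_{\mathfrak m}(I^{(r)}/I^{(q)})_{\mathbf a}\neq0$ with $|U|+1<\dim\Delta+1$, so the window is not Cohen--Macaulay.

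The main obstacle I anticipate is the choice of $U$ and the verification that exactly these two stars survive. The facet sums must be checked on facets through $U$ rather than through $H$, and the inequality $2\le q-1$ must let the $\{2,3\}$-star enter $I^{(q)}$'s complex while being excluded from, or at least separated in, $I^{(r)}$'s complex.
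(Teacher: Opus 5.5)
Your strategy is the paper's: the same $\mathbf b=(q-1)\mathbf e_1+\mathbf e_2+\mathbf e_3$, the same $U$ (maximal in $\operatorname{link}_\Delta(1)\cap\operatorname{link}_\Delta(\{2,3\})$ containing $F\cap G$), the same two-star decomposition, and the same appeal to Lemma~\ref{lem:relative-zero} and the relative Hochster--Takayama formula. However, your facet-sum table swaps the roles of $1$ and $\{2,3\}$, and this produces a false intermediate claim. Since $\mathbf b$ is supported on $\{1,2,3\}$, the sum $\sum_{i\notin E}b_i$ depends only on $E\cap\{1,2,3\}$. For a facet $E\supseteq U$ one has $E\not\supseteq\{1,2\}$ and $E\not\supseteq\{1,3\}$. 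The sum is then $b_2+b_3=2$ when $1\in E$. It is $b_1=q-1$ when $1\notin E$ and $\{2,3\}\subseteq E$. It is at least $q$ otherwise. You wrote the reverse: $2$ for the $\{2,3\}$-star and ``at least $q-1$'' for the star of $1$.

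For $I^{(q)}$ your conclusion $\Delta_{\mathbf a}(I^{(q)})=A\cup B$ still holds, and $q\ge3$ is exactly what admits the star of $1$. For $I^{(r)}$, however, the threshold $r-1<q-1$ excludes precisely the facets missing $1$. Hence $\Delta_{\mathbf a}(I^{(r)})\subseteq A$, not $B$. Your inclusion into $B$ is in fact false for $r\ge3$. A facet through $U\cup\{1\}$ has sum $2\le r-1$, so $\{1\}\in\Delta_{\mathbf a}(I^{(r)})$. But $\{1\}\notin B$, because $U\cup\{1,2,3\}\notin\Delta$. For $r\le2$ it holds only vacuously, since then $\Delta_{\mathbf a}(I^{(r)})$ has no vertex.

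The repair is immediate. Lemma~\ref{lem:relative-zero} applies verbatim with $C=\Delta_{\mathbf a}(I^{(r)})\subseteq A$, with no role swap, and the rest of your argument then coincides with the paper's.

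Separately, the step you flag as needing care is automatic. Since $U\supseteq H$, you have $U\cup\{1,2\}\supseteq F\cup\{2\}\notin\Delta$, and likewise for $3$. Supersets of nonfaces are nonfaces, so your extra conditions on $U$ hold for free, as you yourself used when showing $1,2,3\notin U$. This is exactly what the paper uses to obtain $A\cap B=\{\emptyset\}$.
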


\begin{proof}
Suppose first that $\Delta$ is a matroid. By \cite{MinhTrung,Varbaro},
$S/I^{(t)}$ is Cohen--Macaulay of dimension $\dim(S/I)$ for every $t\ge1$.
The exact sequence
$$
0\longrightarrow I^{(r)}/I^{(q)}
\longrightarrow S/I^{(q)}
\longrightarrow S/I^{(r)}\longrightarrow0
$$
together with the Depth Lemma and Lemma~\ref{lem:support} shows that
$I^{(r)}/I^{(q)}$ is Cohen--Macaulay.

Conversely, suppose that $I^{(r)}/I^{(q)}$ is Cohen--Macaulay and that
$\Delta$ is not a matroid. By Lemma~\ref{lem:matroid-witness}, after
relabeling there are faces $F,G\in\Delta$ such that
$F\setminus G=\{1\}$, $G\setminus F=\{2,3\}$, and
$F\cup\{2\},F\cup\{3\}\notin\Delta$. Set
$$
\mathbf b=(q-1)\mathbf e_1+\mathbf e_2+\mathbf e_3.
$$
We claim that
$$
\Delta_{\mathbf b}(I^{(q)})
=
\operatorname{star}_\Delta(1)
\cup
\operatorname{star}_\Delta(\{2,3\})
$$
and
$$
\Delta_{\mathbf b}(I^{(r)})
\subseteq
\operatorname{star}_\Delta(1).
$$
By Lemma~\ref{lem:symbolic-degree}, a facet $H$ contributes to
$\Delta_{\mathbf b}(I^{(q)})$ exactly when
$\sum_{i\notin H}b_i\le q-1$. If $1\in H$, the sum is at most two, and
$2\le q-1$. If $1\notin H$, the sum is at least $q-1$, with equality
precisely when $2,3\in H$. This proves the first equality.

For the numerator, a contributing facet satisfies
$\sum_{i\notin H}b_i\le r-1$. If $1\notin H$, the left side is at least
$q-1>r-1$. Hence, every contributing facet contains $1$, proving the second
inclusion.

Set
$$
L=\operatorname{link}_\Delta(1)
\cap
\operatorname{link}_\Delta(\{2,3\}).
$$
Since $F\cap G=F\setminus\{1\}=G\setminus\{2,3\}$, the face
$F\cap G$ belongs to $L$. Choose a maximal face $U$ of $L$ containing
$F\cap G$, and set
$\mathbf a=\mathbf b-\sum_{i\in U}\mathbf e_i$. Then
$G_{\mathbf a}=U$ and $\mathbf a^+=\mathbf b$. By
Lemma~\ref{lem:negative-link},
$$
\Delta_{\mathbf a}(I^{(q)})=A\cup B,
$$
where
$$
A=\operatorname{link}_{\operatorname{star}_\Delta(1)}(U)
\text{ and }
B=\operatorname{link}_{\operatorname{star}_\Delta(\{2,3\})}(U).
$$
The complex $A$ contains the vertex $1$, while $B$ contains the edge
$\{2,3\}$.

We claim that $A\cap B=\{\emptyset\}$. Suppose $W$ is a nonempty face of
$A\cap B$. Since $U\cup W\cup\{1\}\in\Delta$, membership $2\in W$
would imply
$$
F\cup\{2\}=(F\cap G)\cup\{1,2\}
\subseteq U\cup W\cup\{1\}\in\Delta,
$$
a contradiction; similarly $3\notin W$. Since
$U\cup W\cup\{2,3\}\in\Delta$, membership $1\in W$ would also
imply $F\cup\{2\}\in\Delta$. Thus, $W$ is disjoint from
$\{1,2,3\}$, so $U\cup W\in L$, contradicting maximality of $U$.

By Lemma~\ref{lem:negative-link} and the numerator inclusion,
$$
\Delta_{\mathbf a}(I^{(r)})\subseteq A.
$$
Lemma~\ref{lem:relative-zero} gives
$$
\widetilde H_0
\bigl(\Delta_{\mathbf a}(I^{(q)}),
\Delta_{\mathbf a}(I^{(r)});k\bigr)\neq0.
$$
Since $U\in\Delta=\Delta(\sqrt{I^{(q)}})$, the relative
Hochster--Takayama formula applies and yields
$$
H_{\mathfrak m}^{|U|+1}
\bigl(I^{(r)}/I^{(q)}\bigr)_{\mathbf a}\neq0.
$$
Since $U\cup\{2,3\}\in\Delta$,
$|U|+2\le\dim\Delta+1$, and Lemma~\ref{lem:support} gives
$$
|U|+1\le\dim\Delta<\dim\Delta+1
=\dim\bigl(I^{(r)}/I^{(q)}\bigr).
$$
This contradicts Cohen--Macaulayness. Hence, $\Delta$ is a matroid.

The final assertion follows because the first paragraph of the proof applies
to every pair $1\le a<b$.
\end{proof}

The exceptional upper exponent $q=2$ can behave very differently. The next
small example shows that the second symbolic conormal module may be
Cohen--Macaulay even though the ordinary conormal module and every ordinary or
symbolic window with upper exponent at least three are not.

\begin{example}\label{ex:symbolic-second-exception}
Let
$\Delta=\langle\{1,4\},\{2,3\},\{2,4\},\{3,4\}\rangle$ and let
$$
I=I_\Delta=(x_1x_2,x_1x_3,x_2x_3x_4)
\subseteq k[x_1,x_2,x_3,x_4].
$$
The graph $\Delta$ is connected and has diameter two, so $S/I^{(2)}$ is
Cohen--Macaulay by \cite[Corollary~2.3]{MinhTrung}. Since $S/I$ is also
Cohen--Macaulay, the exact sequence
$$
0\longrightarrow I/I^{(2)}\longrightarrow S/I^{(2)}
\longrightarrow S/I\longrightarrow0
$$
shows that $I/I^{(2)}$ is Cohen--Macaulay.

However, $\Delta$ is not a matroid: for $F=\{1\}$ and $G=\{2,3\}$,
neither $F\cup\{2\}$ nor $F\cup\{3\}$ is a face. Thus
Theorem~\ref{thm:symbolic} gives that
$I^{(r)}/I^{(q)}$ is not Cohen--Macaulay whenever
$1\le r<q$ and $q\ge3$. Moreover, if $u=x_1x_2x_3x_4$, then
$u\in I^{(2)}\setminus I^2$: the minimal primes of $I$ are
$(x_2,x_3)$, $(x_1,x_4)$, $(x_1,x_3)$, and $(x_1,x_2)$, whereas no product of
two minimal generators of $I$ divides $u$. Hence, $I^2\neq I^{(2)}$, and
Proposition~\ref{prop:second-symbolic} implies that $I/I^2$ is not
Cohen--Macaulay. Since the minimal generators $x_1x_2$ and $x_1x_3$ have
intersecting supports, $I$ is not a complete intersection. Therefore,
Theorem~\ref{thm:ordinary} gives that every ordinary window $I^r/I^q$ with
$q\ge3$ is also non-Cohen--Macaulay.
\end{example}

\end{document}